\documentclass[a4paper,12pt]{article}

\usepackage[margin=1.2in]{geometry}

\usepackage{amsmath}
\usepackage{algorithmic}
\usepackage{algorithm}
\usepackage{amsthm}
\usepackage{amsfonts}
\usepackage{comment}
\usepackage{graphicx}
\usepackage{hyperref}
\hypersetup{hidelinks}
\usepackage{color}
\usepackage{subcaption}
\usepackage{array,multirow}

\newtheorem{assumption} {Assumption}
\newtheorem{theorem} {Theorem}
\newtheorem{lemma} {Lemma}

\newtheorem{remark} {Remark}

\def\x{{\mathbf{x}}}

\def\e{{\mathbf{e}}}

\def\a{{\mathbf{a}}}
\def\u{{\mathbf{u}}}
\def\v{{\mathbf{v}}}
\def\z{{\mathbf{z}}}
\def\w{{\mathbf{w}}}

\def\y{{\mathbf{y}}}
\def\q{{\mathbf{q}}}
\def\p{{\mathbf{p}}}
\def\b{{\mathbf{b}}}

\def\X{{\mathbf{X}}}
\def\Y{{\mathbf{Y}}}
\def\A{{\mathbf{A}}}
\def\M{{\mathbf{M}}}
\def\I{{\mathbf{I}}}
\def\B{{\mathbf{B}}}
\def\C{{\mathbf{C}}}
\def\V{{\mathbf{V}}}
\def\Z{{\mathbf{Z}}}
\def\W{{\mathbf{W}}}
\def\U{{\mathbf{U}}}
\def\Q{{\mathbf{Q}}}
\def\P{{\mathbf{P}}}

\def\D{{\mathbf{D}}}
\def\bR{{\mathbf{R}}}

\def\matE{{\mathbf{E}}}

\newcommand{\mX}{\mathcal{X}}

\newcommand{\mS}{\mathcal{S}}

\newcommand{\mbS}{\mathbb{S}}

\newcommand{\trace}{\textrm{Tr}}
\newcommand{\rank}{\textrm{rank}}
\newcommand{\reals}{\mathbb{R}}

\DeclareMathOperator*{\argmin}{argmin}
\DeclareMathOperator*{\argmax}{argmax}

\def\H{{\mathbf{H}}}
\def\bLambda{{\mathbf{\Lambda}}}

\title{Linear Convergence of a Frank-Wolfe-type Method over the Spectrahedron without Strict Complementarity}
\date{\vspace{-5pt} Faculty of Data and Decision Sciences  \vspace{3pt}\\ Technion - Israel Institute of Technology}
\author{Dan Garber \\ {\small{dangar@technion.ac.il}} }

\begin{document}
\maketitle

\begin{abstract}
We consider smooth convex minimization over the spectrahedron using Frank-Wolfe-type methods based only on extreme-eigenvector computations. In our recent work \cite{garber2026randomized} we presented the first ambient-dimension-independent linear convergence rate under quadratic growth. However, the method makes an additional strong strict complementarity assumption, it is randomized, its linear rate holds only after a burn-in phase and in expectation, and it requires the objective smoothness constant. We show that these limitations can be removed. Assuming quadratic growth and that all optimal solutions have the same rank, but without assuming strict complementarity, we give a deterministic and parameter-free Frank-Wolfe-type method with a global ambient-dimension-independent linear convergence rate. 
\end{abstract}

\section{Introduction}\label{sec:introduction}

We consider the problem
\begin{align}\label{eq:problem}
\min_{\X\in\mS^n} f(\X),
\qquad
\mS^n := \{\X\in\mbS^n~|~\X\succeq 0,~\trace(\X)=1\}.
\end{align}
Throughout, $\mbS^n$ denotes the space of real $n\times n$ symmetric matrices and $\mbS_+^n$ its positive-semidefinite cone. The set $\mS^n$ is known as the spectrahedron in $\mbS^n$. 
For $\A\in\mbS^n$, we order the eigenvalues as $\lambda_1(\A)\geq\cdots\geq\lambda_n(\A)$. We use $\langle{\A,\B}\rangle:=\trace(\A\B)$ for the trace inner product, and $\A^\dagger$ denotes the Moore--Penrose pseudoinverse of $\A$. Throughout, $\Vert{\cdot}\Vert$ denotes an arbitrary unitarily invariant matrix norm and $\Vert{\cdot}\Vert_*$ its corresponding dual norm with respect to the trace inner product. We use $\Vert{\cdot}\Vert_2$ for the Euclidean norm of column vectors and the spectral norm of matrices, and $\Vert{\cdot}\Vert_F$ for the Frobenius norm of matrices. We assume that $f$ is convex and $\beta$-smooth over $\mS^n$ with respect to this primal-dual norm pair, i.e.,
\begin{align}\label{eq:smoothness}
\Vert{\nabla f(\X)-\nabla f(\Y)}\Vert_*\leq \beta\Vert{\X-\Y}\Vert
\qquad
\forall\X,\Y\in\mS^n.
\end{align}
In particular, the following standard inequality holds:
\begin{align}\label{eq:smooth-upper}
f(\Y)\leq f(\X)+\langle{\Y-\X,\nabla f(\X)}\rangle+\frac{\beta}{2}\Vert{\Y-\X}\Vert^2.
\end{align}
Let $\mX^*:=\argmin_{\X\in\mS^n}f(\X)$ denote the optimal set and let $f^*$ denote the optimal value. 
For our linear convergence result we make the following two assumptions.

\begin{assumption}[quadratic growth]\label{ass:qg}
There exists a scalar $\alpha>0$ such that for any $\X\in\mS^n$,
\begin{align}\label{eq:qg}
\min_{\X^*\in\mX^*}\Vert{\X-\X^*}\Vert^2
\leq
\frac{2}{\alpha}\left({f(\X)-f^*}\right).
\end{align}
\end{assumption}

\begin{assumption}[common rank of optimal solutions]\label{ass:optimal-rank}
There exists an integer $r^*\in\{1,\ldots,n\}$ such that $\rank(\X^*)=r^*$ for all $\X^*\in\mX^*$.
 Accordingly we define $\lambda_{r^*}^* := \min_{\X^*\in\mX^*}\lambda_{r^*}(\X^*)$.
\end{assumption}
For more discussion of Assumption \ref{ass:optimal-rank} and its implications see Lemma \ref{lem:common-face} in the sequel and the text following it.

Define also the quantities:
\begin{eqnarray*}
&\widetilde G :=\sup_{\X\in\mS^n}\left\Vert{\nabla f(\X)-\lambda_n(\nabla f(\X))\I}\right\Vert_2,&\\
&{D}:=\sup_{\X,\Y\in\mS^n}\Vert{\X-\Y}\Vert, \qquad \kappa_{r^*} :=
\sup_{\substack{\A\in\mbS^n,~\A\neq\mathbf{0}\\ \rank(\A)\leq2r^*}}
\frac{\Vert{\A}\Vert_F}{\Vert{\A}\Vert}.&
\end{eqnarray*}

Problem \eqref{eq:problem} has numerous applications in various fields including statistics, machine learning, discrete optimization, and more. It is in particular interesting in the context of the Frank-Wolfe optimization method \cite{frank1956algorithm, levitin1966constrained, jaggi2013revisiting}, since the Frank-Wolfe linear optimization step over $\mS^n$ amounts to computing a unit eigenvector corresponding to the smallest eigenvalue of the current gradient \cite{jaggi2010simple, hazan2008sparse}. This can be considerably cheaper than the projection step used by standard projection-based first-order methods as projection onto $\mS^n$ requires, in general, a full eigendecomposition of an $n\times n$ symmetric matrix, which in practical implementations takes $O(n^3)$ time. Moreover, Frank-Wolfe applies rank-one updates, which often allow efficient low-rank implementations. Its main drawback however, is its worst-case convergence rate. Standard Frank-Wolfe satisfies $f(\X_t)-f^* = O\left({\beta{D}^2/t}\right)$, where $t$ is the iteration counter,
and this dependence on $1/t$ is worst-case tight even under quadratic growth (see for instance a typical lower bound in \cite{jaggi2011convex}). Thus, faster rates require exploiting additional structure, and a dependence on the rank $r^*$ of optimal solutions is natural in this setting. Indeed Problem \eqref{eq:problem} has received significant interest in recent years, in particular in the context of Frank-Wolfe-type methods, see for instance \cite{jaggi2010simple, hazan2008sparse, NIPS2016_df877f38, freund2017extended, allen2017linear, garber2023linear, ding2020spectral, danon2022frank}. We refer the interested reader to our very recent work \cite{garber2026randomized} for a more in-depth discussion of related works.  

Our recent work \cite{garber2026randomized} proposed a Frank-Wolfe-type method for this problem that combines the standard Frank-Wolfe step with an away step specialized to the spectrahedron and also a specialized randomized pairwise step. The method requires only three leading-eigenvector computations per iteration and, under quadratic growth and strict complementarity, converges linearly in expectation after a finite burn-in phase, with both the burn-in and the linear rate independent of the ambient dimension. However, the method is randomized, its linear rate holds only in expectation and only after the burn-in phase, it requires knowledge of the smoothness parameter $\beta$, and its analysis relies on strict complementarity, which is a non-trivial assumption. 

In this paper we remove all of these limitations. We present a deterministic and parameter-free Frank-Wolfe-type method which requires only three extreme-eigenvector computations per iteration and simple line-searches. Without Assumptions \ref{ass:qg} and \ref{ass:optimal-rank}, it retains the standard $O(\beta{D}^2/t)$ Frank-Wolfe rate. Under these assumptions it converges globally at a linear rate, without a burn-in phase and without strict complementarity. In particular, for some universal numerical constant $c>0$,
\begin{align*}
f(\X_t)-f^*
\leq
\left(f(\X_1)-f^*\right)
\exp\left(
-c(t-1)
\min\left\{
1,
\frac{\alpha\lambda_{r^*}^*}
{r^*\kappa_{r^*}^2\left(\beta{D}^2+\widetilde G\right)}
\right\}
\right).
\end{align*}
Thus, up to universal constants, the contraction rate has no explicit dependence on the ambient dimension $n$. In particular, in case $\Vert{\cdot}\Vert$ is either the Frobenius norm or the Nuclear norm (sum of singular values), we simply have $\kappa_{r^*} =1$.

While our method builds on our recent work \cite{garber2026randomized}, a key ingredient is a new deterministic pairwise step which rotates a rank-one component supported by the current iterate in a descent direction, which is quite different than the randomized pairwise step introduced in \cite{garber2026randomized}. This yields first-order progress in situations in which both the Frank-Wolfe and away steps may provide only  insufficient progress. Together with a new geometric argument based on quadratic growth rather than a gradient eigengap, this is what allows us to obtain the global deterministic linear rate.

\subsection{Consequences of Assumption \ref{ass:optimal-rank}}\label{sec:setup}

The following simple observation records two consequences of Assumption \ref{ass:optimal-rank}.

\begin{lemma}[common optimal face]\label{lem:common-face}
Under Assumption \ref{ass:optimal-rank}, it holds that $\lambda_{r^*}^*>0$. Moreover, there exists a matrix $\V^*\in\reals^{n\times r^*}$ with $\V^{*\top}\V^*=\I$ such that
\begin{align}\label{eq:common-face}
\mX^*
\subset
\left\{
\V^*\mathbf{S}\V^{*\top}
~\middle|~
\mathbf{S}\in\mbS^{r^*},~\mathbf{S}\succ0,~\trace(\mathbf{S})=1
\right\}.
\end{align}
\end{lemma}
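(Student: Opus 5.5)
Two facts drive the plan: $\mX^*$ is a compact convex set, and under Assumption \ref{ass:optimal-rank} every one of its points has exactly $r^*$ positive eigenvalues. For the first claim, $\mX^*$ is nonempty since $f$ is continuous and $\mS^n$ is compact. It is closed because $\mX^*=\{\X\in\mS^n : f(\X)\le f^*\}$ and $f$ is continuous, so $\mX^*$ is compact. It is convex because $f$ is convex. The map $\X\mapsto\lambda_{r^*}(\X)$ is continuous, since eigenvalues are Lipschitz in the spectral norm by Weyl's inequality. Hence the minimum defining $\lambda_{r^*}^*$ is attained at some $\X^*\in\mX^*$. Since $\rank(\X^*)=r^*$ and $\X^*\succeq0$, this matrix has exactly $r^*$ positive eigenvalues, so $\lambda_{r^*}(\X^*)>0$ and therefore $\lambda_{r^*}^*>0$.

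For the common face, we fix a relative interior point: let $\bar\X$ be a point in the relative interior of $\mX^*$. The barycenter of a finite affinely spanning subset of $\mX^*$ works, and so would any single point if $\mX^*$ is a singleton. Let $\V^*\in\reals^{n\times r^*}$ have orthonormal columns spanning $\mathrm{range}(\bar\X)$, which has dimension $r^*$ by Assumption \ref{ass:optimal-rank}.

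The key step is to show $\mathrm{range}(\X^*)\subseteq\mathrm{range}(\bar\X)$ for every $\X^*\in\mX^*$. Since $\bar\X$ is in the relative interior, there is $\epsilon>0$ with $\Y:=\bar\X-\epsilon(\X^*-\bar\X)\in\mX^*$. Thus $\bar\X$ is a convex combination $\bar\X=\frac{1}{1+\epsilon}\Y+\frac{\epsilon}{1+\epsilon}\X^*$ of two PSD matrices. For PSD $\A,\B$ and $a,b>0$ we have $\ker(a\A+b\B)=\ker\A\cap\ker\B$, because $\v^\top(a\A+b\B)\v=0$ forces $\v^\top\A\v=\v^\top\B\v=0$. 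Hence $\ker\bar\X\subseteq\ker\X^*$, and equivalently $\mathrm{range}(\X^*)\subseteq\mathrm{range}(\bar\X)$. Both ranges have dimension $r^*$, so they coincide.

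It follows that $\X^*=\V^*\V^{*\top}\X^*\V^*\V^{*\top}$. Setting $\mathbf{S}:=\V^{*\top}\X^*\V^*$, this matrix is symmetric and PSD with rank $r^*$, so $\mathbf{S}\succ0$. Also $\trace(\mathbf{S})=\trace(\X^*)=1$, which gives \eqref{eq:common-face}.

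The only delicate point is the relative interior argument, specifically justifying that we may extend beyond $\bar\X$ away from $\X^*$ while staying in $\mX^*$. This is the standard characterization of the relative interior of a convex set, so I expect it to be routine.

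An alternative that avoids relative interiors: if two optimal solutions had different ranges, their midpoint would be optimal by convexity. By the kernel identity, its range would be the sum of the two ranges, which has dimension greater than $r^*$. This contradicts Assumption \ref{ass:optimal-rank}, so all optimal solutions share a single range. I would probably present this shorter version.
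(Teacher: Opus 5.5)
Your proposal is correct: the midpoint argument you say you would present is exactly the paper's proof (the kernel of a positive combination of PSD matrices is the intersection of their kernels, so the midpoint's range is the sum of the two ranges, forcing them to coincide). Your proof of $\lambda_{r^*}^*>0$ is also the paper's, with the compactness of $\mX^*$ and the continuity of $\lambda_{r^*}(\cdot)$ made explicit where the paper only says ``its minimum is strictly positive''; the relative-interior route is valid too, but it needs the extension characterization of $\mathrm{ri}(\mX^*)$ and gains nothing here, since the rank assumption already gives equal dimensions.
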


\begin{proof}
Fix $\X_1^*,\X_2^*\in\mX^*$. By convexity of $f$, the midpoint $(\X_1^*+\X_2^*)/2$ is also optimal and therefore has rank $r^*$. Since both matrices are positive semidefinite,
\begin{align*}
\textrm{Ker}(\X_1^*+\X_2^*)
=
\textrm{Ker}(\X_1^*)\cap\textrm{Ker}(\X_2^*),
\end{align*}
and hence
\begin{align*}
\textrm{Im}(\X_1^*+\X_2^*)
=
\textrm{Im}(\X_1^*)+\textrm{Im}(\X_2^*).
\end{align*}
The two images have dimension $r^*$, while the image of their sum also has dimension $r^*$. It follows that
$\textrm{Im}(\X_1^*)=\textrm{Im}(\X_2^*)$.
Thus, all optimal solutions have a common image. Choosing an orthonormal basis $\V^*$ for this image gives \eqref{eq:common-face}; the matrix $\mathbf{S}$ is positive definite because every optimal solution has rank $r^*$.

Since $\lambda_{r^*}(\X)$ is strictly positive at every point of $\mX^*$, its minimum over $\mX^*$ is strictly positive and so, $\lambda_{r^*}^*>0$.
\end{proof}

Assumption \ref{ass:optimal-rank} is in fact equivalent to requiring that the nonzero eigenvalues of all optimal solutions are uniformly bounded away from zero. Indeed, Lemma \ref{lem:common-face} proves one direction. Conversely, suppose there exists $\mu>0$ such that every nonzero eigenvalue of every $\X^*\in\mX^*$ is at least $\mu$. If two optimal solutions had different images, then for some endpoint $\X_1^*$ the matrices $(1-\tau)\X_1^*+\tau\X_2^*,~ \tau>0$,
would have rank strictly larger than $\X_1^*$. Since these matrices are optimal by convexity and converge to $\X_1^*$ as $\tau\downarrow0$, their additional positive eigenvalues must converge to zero, contradicting the uniform lower bound $\mu$. Hence all optimal solutions have the same image, and therefore the same rank.

\subsection{Organization of this paper}
In Section \ref{sec:algorithm} we present our algorithm. In Section \ref{sec:implementation} we detail a concrete efficient implementation in terms of memory and runtime. In Section \ref{subsec:pairwise-need} we demonstrate why the inclusion of a pairwise step in the algorithm is needed. We show that relying only on the standard Frank-Wolfe step or the away step (which are sufficient for linearly converging Frank-Wolfe methods over polytopes \cite{garber2016linearly, lacoste2015global}) is not enough to guarantee sufficient objective decrease. In Section \ref{sec:analysis} we state our main convergence rate theorem and prove it.

\section{The Algorithm}\label{sec:algorithm}
Our algorithm is given below as Algorithm \ref{alg:newFW}. Fix an iteration $t$ and write
\begin{align*}
\nabla_t:=\nabla f(\X_t),
\qquad
\widetilde{\nabla}_t:=\nabla_t-\lambda_n(\nabla_t)\I\succeq0.
\end{align*}
Since all matrices in $\mS^n$ have unit trace, for any $\X,\Y\in\mS^n$,
$\langle{\widetilde{\nabla}_t,\Y-\X}\rangle =\langle{\nabla_t,\Y-\X}\rangle$.
Thus, the shift by $\lambda_n(\nabla_t)\I$ does not change any directional derivative between feasible points. We let $\v_{t,+}$ be a unit eigenvector corresponding to $\lambda_n(\nabla_t)$. Additionally, we compute
\begin{align}\label{eq:vminus-def}
\v_{t,-}
\in
\argmax_{\v\in\textrm{Im}(\X_t):\Vert{\v}\Vert_2=1}
\v^{\top}\widetilde{\nabla}_t\v,
\end{align}
as well as
\begin{align}\label{eq:u-ratio-def}
\u_t
\in
\argmax_{\u\in\textrm{Im}(\X_t):\Vert{\u}\Vert_2=1}
\frac{\u^{\top}\widetilde{\nabla}_t^2\u}{\u^{\top}\X_t^{\dagger}\u}.
\end{align}
We define the following three quantities (certificates) used by the algorithm:
\begin{align}\label{eq:algorithm-certificates}
a_t&:=\langle{\widetilde{\nabla}_t,\X_t}\rangle^2,\nonumber\\
b_t&:=\left({\v_{t,-}^{\top}\widetilde{\nabla}_t\v_{t,-}-\langle{\widetilde{\nabla}_t,\X_t}\rangle}\right)^2,\nonumber\\
c_t&:=
\frac{\u_t^{\top}\widetilde{\nabla}_t^2\u_t-(\u_t^{\top}\widetilde{\nabla}_t\u_t)^2}
{\u_t^{\top}\X_t^{\dagger}\u_t}.
\end{align}
These will correspond to the possible decrement in function value on iteration $t$.


The algorithm uses the vectors $\v_{t,+}, \v_{t,-}, \u_t$ to construct three possible update steps as we now detail.
\medskip

\noindent\textbf{Frank-Wolfe step:}
As in our previous work \cite{garber2026randomized}, the Frank-Wolfe step moves from $\X_t$ towards the extreme point $\v_{t,+}\v_{t,+}^{\top}$, exactly as in the standard Frank-Wolfe method over the spectrahedron. By definition, the quantity $\langle{\widetilde{\nabla}_t,\X_t}\rangle = \langle{\nabla_t, \X_t - \v_{t,+}\v_{t,+}^{\top}}\rangle$ is precisely the Frank-Wolfe gap at $\X_t$.

\medskip
\noindent\textbf{Away and Drop steps:}
We use the same away steps introduced in our previous work \cite{garber2026randomized}.
The away step decreases the weight of the rank-one matrix $\v_{t,-}\v_{t,-}^{\top}$, where $\v_{t,-}$ is chosen according to \eqref{eq:vminus-def}. For the nondegenerate case $\rank(\X_t) > 1$, we set
\begin{align}\label{eq:away-s}
s_t:=\left({\v_{t,-}^{\top}\X_t^{\dagger}\v_{t,-}-1}\right)^{-1}.
\end{align}
This is the maximal scaling for the parametrization used below. Indeed, for $\theta\in[0,1]$, setting
$\eta:=\frac{\theta s_t}{1+\theta s_t}$
gives
\begin{align*}
\X_t+\theta s_t(\X_t-\v_{t,-}\v_{t,-}^{\top})
=
\frac{1}{1-\eta}
\left({\X_t-\eta\v_{t,-}\v_{t,-}^{\top}}\right),
\end{align*}
When $\theta=1$, we have $\eta = (\v_{t,-}^{\top}\X_t^{\dagger}\v_{t,-})^{-1}$, and the rank is reduced by one (see Lemma \ref{lem:stepsize} below). We refer to such a step as a \emph{drop step}.

\medskip
\noindent\textbf{Pairwise step:}
As in our previous work \cite{garber2026randomized} we also consider a specialized pairwise step, however one that is very different from the (randomized) step suggested in \cite{garber2026randomized}.
The pairwise step first removes the maximal feasible weight in the direction $\u_t\u_t^{\top}$ and then places the same weight on a nearby rank-one matrix. In the nondegenerate case $(\I-\u_t\u_t^{\top})\widetilde\nabla_t\u_t \neq \mathbf{0}$ set
\begin{align}\label{eq:pairwise-gamma-p}
\gamma_t:=\left({\u_t^{\top}\X_t^{\dagger}\u_t}\right)^{-1},
\quad
\p_t:=
\frac{(\I-\u_t\u_t^{\top})\widetilde\nabla_t\u_t}
{\Vert{(\I-\u_t\u_t^{\top})\widetilde\nabla_t\u_t}\Vert_2}, \quad
\v_t(\theta):=\frac{\u_t-\theta\p_t}{\sqrt{1+\theta^2}}.
\end{align}
Note that $\p_t\perp\u_t$ and therefore $\Vert{\v_t(\theta)}\Vert_2=1$, i.e., $\v_t(\theta)$ is a rotation of $\u_t$.

While the definitions of $\u_t, \p_t, \v_t(\theta)$ may seem unintuitive, we postpone the justification to later on, and as part of the convergence analysis. In Lemma \ref{lem:pairwise-decrease} we give a step-by-step derivation that justifies these choices.

\medskip
\noindent\textbf{Line-search and choice of step:}
We perform step-size search for each of the three steps. For each step we define a step-size function
\begin{align}\label{eq:trial-path}
\begin{aligned}
\Phi_t^{\rm FW}(\theta)
&:=\X_t+\theta(\v_{t,+}\v_{t,+}^{\top}-\X_t),\\
\Phi_t^{\rm A}(\theta)
&:=\X_t+\theta s_t(\X_t-\v_{t,-}\v_{t,-}^{\top}),\\
\Phi_t^{\rm P}(\theta)
&:=\X_t+\gamma_t\left({\v_t(\theta)\v_t(\theta)^{\top}-\u_t\u_t^{\top}}\right),
\qquad \theta\in[0,1].
\end{aligned}
\end{align}
Their corresponding Armijo coefficients are
\begin{align}\label{eq:armijo-coefficient}
\begin{aligned}
\delta_t^{\rm FW}
&:=\frac{1}{2}\langle{\widetilde{\nabla}_t,\X_t}\rangle,\\
\delta_t^{\rm A}
&:=\frac{1}{2}s_t\left({\v_{t,-}^{\top}\widetilde{\nabla}_t\v_{t,-}-\langle{\widetilde{\nabla}_t,\X_t}\rangle}\right),\\
\delta_t^{\rm P}
&:=\gamma_t\Vert{(\I-\u_t\u_t^{\top})\widetilde\nabla_t\u_t}\Vert_2.
\end{aligned}
\end{align}
Let $\Theta:=\{0\}\cup\{2^{-j}:j=0,1,\ldots\}$. For each nondegenerate case $i\in\{{\rm FW},{\rm A},{\rm P}\}$, the Armijo search returns the largest $\theta\in\Theta$ satisfying
\begin{align}\label{eq:armijoCond}
f(\Phi_t^i(\theta))\leq f(\X_t)-\theta\delta_t^i.
\end{align}

We use the following conventions in the two degenerate cases. If $\rank(\X_t) = 1$ an away-step is not possible. Also, if $\delta_t^{\rm A} =0$, an away-step is not sensible. In these cases we simply set $\Phi_t^{\rm A}(\theta)\equiv\X_t$, $\delta_t^{\rm A}=0$, and the away search returns $\theta_t^{\rm A}=0$ and $\X_t^{\rm A}=\X_t$. If $(\I-\u_t\u_t^{\top})\widetilde\nabla_t\u_t=\mathbf{0}$, then the vector $\p_t$ in the pairwise step is undefined and we set $\Phi_t^{\rm P}(\theta)\equiv\X_t$ and $\delta_t^{\rm P}=0$. 

If $\max\{a_t,b_t,c_t\}=0$, the method terminates. Otherwise, the three Armijo searches are performed. If the away search accepts the full step, the algorithm takes this step and the iteration is called a \emph{drop iteration}. Otherwise, the algorithm takes the point with the smallest objective value among the three Armijo points. Such an iteration is called a \emph{non-drop iteration}.

\begin{remark}
In principle, all three Armijo step-size searches could be readily replaced with exact line-search w.r.t. the objective function $f$. Indeed for the Frank-Wolfe and away steps this is completely standard in the literature. However, for the pairwise step the dependence of $\Phi_t^{\rm P}(\theta)$ on the parameter $\theta$ makes such a procedure more complex. 
\end{remark}

\begin{algorithm}[H]
\begin{algorithmic}
\caption{Deterministic and Parameter-free Spectrahedron Frank-Wolfe with Away and Pairwise steps}\label{alg:newFW}
\STATE $\X_1\gets\x_1\x_1^\top$ for an arbitrary unit vector $\x_1\in\reals^n$
\FOR{$t=1,2,\ldots$}
\STATE $\v_{t,+}\gets$ leading eigenvector of $-\nabla{}f(\X_t)$
\STATE $\widetilde{\nabla}_t\gets\nabla{}f(\X_t)-(\v_{t,+}^\top\nabla{}f(\X_t)\v_{t,+})\I$
\STATE $\displaystyle \v_{t,-}\gets\argmax_{\v\in\textrm{Im}(\X_t):\Vert{\v}\Vert_2=1}\v^\top\widetilde{\nabla}_t\v$
\STATE $\displaystyle \u_t\gets\argmax_{\u\in\textrm{Im}(\X_t):\Vert{\u}\Vert_2=1}\frac{\u^\top\widetilde{\nabla}_t^2\u}{\u^\top\X_t^\dagger\u}$
\STATE $\displaystyle a_t\gets\langle{\widetilde{\nabla}_t,\X_t}\rangle^2$, $\displaystyle b_t\gets\left({\v_{t,-}^{\top}\widetilde{\nabla}_t\v_{t,-}-\langle{\widetilde{\nabla}_t,\X_t}\rangle}\right)^2$, $\displaystyle c_t\gets\frac{\u_t^{\top}\widetilde{\nabla}_t^2\u_t-(\u_t^{\top}\widetilde{\nabla}_t\u_t)^2}{\u_t^{\top}\X_t^{\dagger}\u_t}$

\IF{$\max\{a_t,b_t,c_t\}=0$}
\RETURN
\ENDIF
\STATE form $\Phi_t^{\rm FW},\Phi_t^{\rm A},\Phi_t^{\rm P}$ and $\delta_t^{\rm FW},\delta_t^{\rm A},\delta_t^{\rm P}$ according to Eq. \eqref{eq:trial-path}--\eqref{eq:armijo-coefficient}
\STATE compute $\theta_t^{\rm FW},\theta_t^{\rm A},\theta_t^{\rm P}$ and $\X_t^{\rm FW},\X_t^{\rm A},\X_t^{\rm P}$ by the three Armijo searches
\IF{$\theta_t^{\rm A}=1$}
\STATE $\X_{t+1}\gets\X_t^{\rm A}$ \COMMENT{drop step}
\ELSE
\STATE $\displaystyle \X_{t+1}\gets\argmin_{\X\in\{\X_t^{\rm FW},\X_t^{\rm A},\X_t^{\rm P}\}} f(\X)$
\ENDIF
\ENDFOR
\end{algorithmic}
\end{algorithm}



As in \cite{garber2026randomized}, the feasibility of trial points (and in particular the produced iterates) of Algorithm \ref{alg:newFW} follows directly from the following lemma (a proof is given in the appendix for completeness).
\begin{lemma}\label{lem:stepsize}
Let $\X\in\mbS^n_+$ and $\v\in\textrm{Im}(\X)$, $\v\neq\mathbf{0}$. Consider the matrix $\Y=\X-\lambda\v\v^{\top}$ for some $\lambda\geq0$. If $\lambda\leq(\v^{\top}\X^{\dagger}\v)^{-1}$, then $\Y\succeq0$. Moreover, if $\lambda=(\v^{\top}\X^{\dagger}\v)^{-1}$, then $\rank(\Y)=\rank(\X)-1$ and $\X^{\dagger}\v\in\textrm{Ker}(\Y)$.
\end{lemma}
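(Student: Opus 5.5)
Everything reduces to the range of $\X$, where $\X$ is positive definite. Let $r=\rank(\X)$ and write the compact eigendecomposition $\X=\U\bLambda\U^{\top}$ with $\U\in\reals^{n\times r}$, $\U^{\top}\U=\I$, and $\bLambda\succ0$ diagonal. Then $\X^{\dagger}=\U\bLambda^{-1}\U^{\top}$. Since $\v\in\textrm{Im}(\X)=\textrm{Im}(\U)$, we have $\v=\U\w$ for $\w=\U^{\top}\v\neq\mathbf{0}$, so $\v^{\top}\X^{\dagger}\v=\w^{\top}\bLambda^{-1}\w>0$. Moreover $\Y=\U(\bLambda-\lambda\w\w^{\top})\U^{\top}$. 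Hence $\Y\succeq0$ iff $\bLambda-\lambda\w\w^{\top}\succeq0$, and $\rank(\Y)=\rank(\bLambda-\lambda\w\w^{\top})$ because $\U$ has orthonormal columns.

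For positivity, congruence by $\bLambda^{-1/2}$ gives
\[
\bLambda-\lambda\w\w^{\top}=\bLambda^{1/2}\left(\I-\lambda\z\z^{\top}\right)\bLambda^{1/2},\qquad \z:=\bLambda^{-1/2}\w.
\]
The eigenvalues of $\I-\lambda\z\z^{\top}$ are $1$, with multiplicity $r-1$, and $1-\lambda\Vert{\z}\Vert_2^2=1-\lambda\,\v^{\top}\X^{\dagger}\v$. These are all nonnegative exactly when $\lambda\leq(\v^{\top}\X^{\dagger}\v)^{-1}$, which proves the first claim.

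For the equality case $\lambda=(\v^{\top}\X^{\dagger}\v)^{-1}$, the matrix $\I-\lambda\z\z^{\top}$ has exactly one zero eigenvalue, with eigenvector $\z$, and all other eigenvalues equal to $1$. Its rank is therefore $r-1$, and the congruence by the invertible $\bLambda^{1/2}$ preserves rank, so $\rank(\Y)=r-1$.

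For the kernel claim, it is cleanest to compute directly:
\[
\Y\X^{\dagger}\v=\X\X^{\dagger}\v-\lambda\v(\v^{\top}\X^{\dagger}\v)=\v-\v=\mathbf{0}.
\]
Here $\X\X^{\dagger}$ is the orthogonal projection onto $\textrm{Im}(\X)$, which contains $\v$.

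No step is a real obstacle. The only care needed is the reduction to $\textrm{Im}(\X)$, which requires $\X^{\dagger}$ to be handled correctly through the compact decomposition so that the argument also covers rank-deficient $\X$.
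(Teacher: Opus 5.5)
Your proof is correct and follows essentially the same route as the paper. Both arguments reduce positivity to $\I-\lambda\z\z^{\top}\succeq0$ via a congruence: the paper uses $\X^{1/2}$ and $(\X^{\dagger})^{1/2}$ in the ambient space, while you use the compact eigendecomposition. Both also verify $\Y\X^{\dagger}\v=\mathbf{0}$ by the same direct computation.

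The only real difference is the rank count. You read it off from the single zero eigenvalue of $\I-\lambda\z\z^{\top}$. The paper instead combines $\textrm{Im}(\Y)\subseteq\textrm{Im}(\X)$, the nonzero kernel vector $\X^{\dagger}\v\in\textrm{Im}(\X)$, and the fact that a rank-one update lowers the rank by at most one.
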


\begin{lemma}[feasibility of Algorithm \ref{alg:newFW}]\label{lem:feasibility}
Every trial point considered by Algorithm \ref{alg:newFW}, and hence every iterate produced by it, belongs to $\mS^n$.
\end{lemma}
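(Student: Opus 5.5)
We argue by induction on $t$. The base case holds because $\X_1=\x_1\x_1^\top$ with $\Vert{\x_1}\Vert_2=1$ is PSD with unit trace. For the inductive step we assume $\X_t\in\mS^n$ and show that every trial point $\Phi_t^i(\theta)$, $i\in\{{\rm FW},{\rm A},{\rm P}\}$, $\theta\in[0,1]$, lies in $\mS^n$. Since $\X_{t+1}$ is one of the Armijo points $\X_t^i=\Phi_t^i(\theta_t^i)$ with $\theta_t^i\in\Theta\subset[0,1]$, this gives $\X_{t+1}\in\mS^n$. The degenerate conventions $\Phi_t^i\equiv\X_t$ are trivially feasible, so only the nondegenerate cases need attention.

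\textbf{Frank-Wolfe step.} For $\theta\in[0,1]$, the point $\Phi_t^{\rm FW}(\theta)$ is a convex combination of $\X_t$ and $\v_{t,+}\v_{t,+}^\top$. Both lie in the convex set $\mS^n$, so the trial point does too.

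\textbf{Away step.} Here $\rank(\X_t)>1$ and $\v_{t,-}\in\textrm{Im}(\X_t)$ is a unit vector.
\begin{itemize}
\item First we check that $s_t$ is well defined and positive, i.e.\ that $\v_{t,-}^\top\X_t^\dagger\v_{t,-}>1$. On $\textrm{Im}(\X_t)$ the eigenvalues of $\X_t^\dagger$ are $1/\lambda_i\geq1$, because the nonzero eigenvalues of $\X_t$ are at most $\trace(\X_t)=1$. Hence $\v^\top\X_t^\dagger\v\geq1$ for every unit $\v\in\textrm{Im}(\X_t)$.
\item Equality would require every nonzero eigenvalue met by $\v$ to equal $1$. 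That forces $\X_t$ to be rank one, which is excluded.
\item Using the reparametrization already given in the text, $\Phi_t^{\rm A}(\theta)=\frac{1}{1-\eta}(\X_t-\eta\v_{t,-}\v_{t,-}^\top)$ with $\eta=\frac{\theta s_t}{1+\theta s_t}$. This $\eta$ is increasing in $\theta$, and at $\theta=1$ it equals $(\v_{t,-}^\top\X_t^\dagger\v_{t,-})^{-1}<1$.
\item Lemma \ref{lem:stepsize} then gives $\X_t-\eta\v_{t,-}\v_{t,-}^\top\succeq0$ for all $\theta\in[0,1]$, and dividing by $1-\eta>0$ preserves this.
\item The trace is $\frac{1-\eta}{1-\eta}=1$.
\end{itemize}

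\textbf{Pairwise step.} The main point is to write $\Phi_t^{\rm P}(\theta)=(\X_t-\gamma_t\u_t\u_t^\top)+\gamma_t\v_t(\theta)\v_t(\theta)^\top$.
\begin{itemize}
\item Since $\u_t\in\textrm{Im}(\X_t)$ is a unit vector and $\gamma_t=(\u_t^\top\X_t^\dagger\u_t)^{-1}$, Lemma \ref{lem:stepsize} shows that the first term is PSD.
\item The second term is PSD because $\gamma_t>0$.
\item For the trace, $\trace(\v_t(\theta)\v_t(\theta)^\top)=\Vert{\v_t(\theta)}\Vert_2^2=1$. This holds because $\p_t\perp\u_t$ and both are unit vectors, so $\Vert{\u_t-\theta\p_t}\Vert_2^2=1+\theta^2$.
\item Hence the two rank-one changes cancel in trace, and $\trace(\Phi_t^{\rm P}(\theta))=\trace(\X_t)=1$.
\end{itemize}

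No step is a real obstacle. The one place needing care is the away step, where we must confirm that $s_t$ is finite and that the reparametrization keeps $\eta$ below the threshold of Lemma \ref{lem:stepsize}; both are handled by the eigenvalue bound on $\X_t^\dagger$ over $\textrm{Im}(\X_t)$.
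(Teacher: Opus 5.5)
Your proof is correct and follows the route the paper intends. The paper gives no separate argument and only says that feasibility ``follows directly from'' Lemma~\ref{lem:stepsize}; you make that explicit via convexity for the Frank--Wolfe points, Lemma~\ref{lem:stepsize} with the $\eta$-reparametrization for the away points, and Lemma~\ref{lem:stepsize} for the maximal subtraction followed by a PSD rank-one addition for the pairwise points, together with trace preservation. You also verify that $\v_{t,-}^{\top}\X_t^{\dagger}\v_{t,-}>1$ when $\rank(\X_t)>1$, so $s_t$ is finite, which is a detail the paper leaves implicit.
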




\subsection{Implementation details}\label{sec:implementation}
There are many possible efficient implementations of Algorithm \ref{alg:newFW} using various efficient linear algebra tools in order to store and update the state of the algorithm (two such approaches are already discussed in our previous work  \cite{garber2026randomized}). Here we present one simple approach that uses  $O(n\rank(\X_t))$ memory, instead of the worst-case $O(n^2)$, to store the state of the algorithm on iteration $t$. 

Denote $r_t:=\rank(\X_t)$ and maintain a thin factorization in the form
\begin{align*}
\X_t=\U_t\mathbf{S}_t\U_t^{\top},
\quad
\U_t\in\reals^{n\times r_t},
\quad
\U_t^{\top}\U_t=\I,
\quad
\mathbf{S}_t\succ0,
\quad
\W_t:=\mathbf{S}_t^{-1} \succ 0.
\end{align*}
The pseudoinverse and the projection matrix onto $\textrm{Im}(\X_t)$ are then represented implicitly as $\X_t^{\dagger}=\U_t\W_t\U_t^{\top}$ and $\Pi_t=\U_t\U_t^{\top}$. Thus, vector products with $\X_t$, $\X_t^{\dagger}$, and $\Pi_t$ require $O(nr_t+r_t^2)$ arithmetic operations, and the maintained state requires $O(nr_t+r_t^2)=O(nr_t)$ memory.

\medskip
\noindent\textbf{Computing the vectors $\v_{t,+},\v_{t,-},\u_t$:}
The computation of $\v_{t,+}$ is exactly the standard Frank-Wolfe leading-eigenvector computation w.r.t. $-\nabla_t$. As in \cite{garber2026randomized}, the constrained computation of $\v_{t,-}$ in \eqref{eq:vminus-def} can also be reduced to a standard leading-eigenvector computation. Indeed, consider the symmetric matrix $\Pi_t\widetilde{\nabla}_t\Pi_t+\Pi_t$. It vanishes on $\textrm{Im}(\X_t)^{\perp}$, while on any unit vector $\v\in\textrm{Im}(\X_t)$ its Rayleigh quotient equals $1+\v^{\top}\widetilde{\nabla}_t\v\geq1$. Hence every leading eigenvector belongs to $\textrm{Im}(\X_t)$ and solves \eqref{eq:vminus-def}.

The computation of $\u_t$ in \eqref{eq:u-ratio-def} can similarly be reduced to a standard leading-eigenvector computation. For every nonzero $\u\in\textrm{Im}(\X_t)$ write $\u=\X_t^{1/2}\y$. Then
\begin{align*}
\frac{\u^{\top}\widetilde{\nabla}_t^2\u}{\u^{\top}\X_t^{\dagger}\u}
=
\frac{\y^{\top}\X_t^{1/2}\widetilde{\nabla}_t^2\X_t^{1/2}\y}{\Vert{\y}\Vert_2^2}.
\end{align*}
Therefore, the optimal value in \eqref{eq:u-ratio-def} is $\lambda_1(\X_t^{1/2}\widetilde{\nabla}_t^2\X_t^{1/2})=\lambda_1(\widetilde{\nabla}_t\X_t\widetilde{\nabla}_t)$, where the equality follows since the two matrices have the same nonzero eigenvalues. If $\z_t$ is a leading unit eigenvector of $\widetilde{\nabla}_t\X_t\widetilde{\nabla}_t$ corresponding to a positive eigenvalue, then
\begin{align}\label{eq:u-from-leading}
\u_t
=
\frac{\X_t\widetilde{\nabla}_t\z_t}{\Vert{\X_t\widetilde{\nabla}_t\z_t}\Vert_2}
\end{align}
solves \eqref{eq:u-ratio-def}. If $\lambda_1(\widetilde{\nabla}_t\X_t\widetilde{\nabla}_t)=0$, then the numerator in \eqref{eq:u-ratio-def} vanishes for every $\u\in\textrm{Im}(\X_t)$, and any unit vector in this image is optimal. A matrix-vector product with $\widetilde{\nabla}_t\X_t\widetilde{\nabla}_t$ requires two applications of $\widetilde{\nabla}_t$ and $O(nr_t+r_t^2)$ additional arithmetic operations.

Once $\v_{t,+}$, $\v_{t,-}$, and $\u_t$ have been computed, the quantities
$s_t$, $\gamma_t$, $\p_t$, and $c_t$ can be obtained directly from the
maintained representation and a small number of additional matrix-vector
products.
In particular, for any $\v\in\textrm{Im}(\X_t)$, $\v^{\top}\X_t^{\dagger}\v=(\U_t^{\top}\v)^{\top}\W_t(\U_t^{\top}\v)$. Thus, $s_t$ and $\gamma_t$ require $O(nr_t+r_t^2)$ arithmetic operations. Moreover, after computing $\nabla_t\u_t$, the vector $\p_t$ and the quantity $c_t$ are obtained from $c_t=\gamma_t\Vert{(\I-\u_t\u_t^{\top})\nabla_t\u_t}\Vert_2^2$. The quantity $\v_{t,-}^{\top}\widetilde{\nabla}_t\v_{t,-}$ is available from the Rayleigh quotient computed with $\v_{t,-}$.

\medskip
\noindent\textbf{Updating the maintained representation:}
$\U_t$ is changed only when a direction enters or leaves $\textrm{Im}(\X_t)$, while $\mathbf{S}_t$ and $\W_t=\mathbf{S}_t^{-1}$ are updated on this image. Whenever the updated small core remains invertible, its inverse is obtained from $\W_t$ using the standard Sherman--Morrison or block-inverse formulas, and hence in $O(r_t^2)$ arithmetic operations. We therefore only describe explicitly the changes of the image and the rank-decreasing cases.

Suppose first that a Frank-Wolfe step with step-size $\theta:=\theta_t^{\rm FW}$ is accepted. Set $\a:=\U_t^{\top}\v_{t,+}$ and $\z:=\v_{t,+}-\U_t\a$. If $0<\theta<1$ and $\z=0$, the image is unchanged: $\U_{t+1}=\U_t$ and $\mathbf{S}_{t+1}=(1-\theta)\mathbf{S}_t+\theta\a\a^{\top}$. The matrix $\mathbf{S}_{t+1}$ is positive definite, and $\W_{t+1}=\mathbf{S}_{t+1}^{-1}$ is obtained from $\W_t$ by a scalar rescaling and the Sherman--Morrison formula in $O(r_t^2)$ time.

If $0<\theta<1$ and $\z\neq0$, one new direction enters the image. In this case set
\begin{align*}
\U_{t+1}
=
\left[\U_t,\frac{\z}{\Vert{\z}\Vert_2}\right], \qquad \mathbf{S}_{t+1}
=
\begin{pmatrix}
(1-\theta)\mathbf{S}_t+\theta\a\a^{\top}
&
\theta\Vert{\z}\Vert_2\a
\\
\theta\Vert{\z}\Vert_2\a^{\top}
&
\theta\Vert{\z}\Vert_2^2
\end{pmatrix}.
\end{align*}
This matrix is positive definite, and its inverse is obtained from $\W_t$ using the Sherman--Morrison formula for the upper-left block followed by the standard block-inverse formula\footnote{For $\mathbf{M}=\begin{pmatrix}\mathbf{A} & \mathbf{b}\\ \mathbf{b}^{\top} & d\end{pmatrix}$, $\mathbf{A}\in\reals^{r\times r}$, $\mathbf{b}\in\reals^r$, $d\in\reals$, where $\mathbf{A}$ is invertible and $s:=d-\mathbf{b}^{\top}\mathbf{A}^{-1}\mathbf{b}\neq0$, the standard block-inverse formula gives $\mathbf{M}^{-1}=\begin{pmatrix}\mathbf{A}^{-1}+\mathbf{A}^{-1}\mathbf{b}s^{-1}\mathbf{b}^{\top}\mathbf{A}^{-1} & -\mathbf{A}^{-1}\mathbf{b}s^{-1}\\ -s^{-1}\mathbf{b}^{\top}\mathbf{A}^{-1} & s^{-1}\end{pmatrix}$.}. Both operations require $O(r_t^2)$ arithmetic operations. If $\theta=1$, simply set $\U_{t+1}=\v_{t,+}$ and $\mathbf{S}_{t+1}=\W_{t+1}=1$. The case $\theta=0$ clearly leaves the maintained state unchanged.

Consider next an away step and write $\a:=\U_t^{\top}\v_{t,-}$. If $\theta:=\theta_t^{\rm A}<1$, then the image is unchanged, $\U_{t+1}=\U_t$ and $\mathbf{S}_{t+1}=(1+\theta s_t)\mathbf{S}_t-\theta s_t\a\a^{\top}$. The matrix $\mathbf{S}_{t+1}$ is positive definite and its inverse is obtained from $\W_t$ by rescaling and the Sherman--Morrison formula.

If $\theta_t^{\rm A}=1$, the step is a drop step and the matrix $\widehat{\mathbf{S}}:=(1+s_t)\mathbf{S}_t-s_t\a\a^{\top}$ has rank $r_t-1$ by Lemma \ref{lem:stepsize}. Moreover, by the definition of $s_t$, $\widehat{\mathbf{S}}\W_t\a=0$. Let $\mathbf{H}$ be a Householder reflector satisfying $\mathbf{H}\W_t\a/\Vert{\W_t\a}\Vert_2=\e_{r_t}$. Then the last row and column of $\mathbf{H}^{\top}\widehat{\mathbf{S}}\mathbf{H}$ are zero. We obtain $\U_{t+1}$ by deleting the last column of $\U_t\mathbf{H}$, and $\mathbf{S}_{t+1}$ by deleting the last row and column of $\mathbf{H}^{\top}\widehat{\mathbf{S}}\mathbf{H}$. Moreover, $\W_{t+1}$ is obtained simply by deleting the last row and column of $(1+s_t)^{-1}\mathbf{H}^{\top}\W_t\mathbf{H}$. To verify the last claim, note that $\widehat{\mathbf{S}}\W_t=(1+s_t)\I-s_t\a\a^{\top}\W_t$, and, after the change of coordinates by $\mathbf{H}$, the vector $\W_t\a$ is supported only on the last coordinate. Thus, the product of the leading $(r_t-1)\times(r_t-1)$ blocks of $\mathbf{H}^{\top}\widehat{\mathbf{S}}\mathbf{H}$ and $\mathbf{H}^{\top}\W_t\mathbf{H}$ equals $(1+s_t)\I$.

Finally, suppose that a pairwise step is accepted. Write $\a:=\U_t^{\top}\u_t$. The subtraction part gives
\begin{align*}
\X_t-\gamma_t\u_t\u_t^{\top}
=
\U_t\left(\mathbf{S}_t-\gamma_t\a\a^{\top}\right)\U_t^{\top}.
\end{align*}
By Lemma \ref{lem:stepsize}, the matrix $\mathbf{S}_t-\gamma_t\a\a^{\top}$ has rank $r_t-1$. Moreover, by the definition of $\gamma_t$, $(\mathbf{S}_t-\gamma_t\a\a^{\top})\W_t\a=0$. Applying again a Householder reflector $\mathbf{H}$ which maps $\W_t\a/\Vert{\W_t\a}\Vert_2$ to $\e_{r_t}$ and deleting the last coordinate gives a rank-$(r_t-1)$ representation. In this case the inverse of the reduced matrix is obtained by deleting the last row and column of $\mathbf{H}^{\top}\W_t\mathbf{H}$. Indeed, $(\mathbf{S}_t-\gamma_t\a\a^{\top})\W_t=\I-\gamma_t\a\a^{\top}\W_t$, and the same argument as above shows that the leading blocks after the change of coordinates are mutual inverses.

Let $\U$, $\mathbf{S}$, and $\W=\mathbf{S}^{-1}$ denote this reduced representation, and write $\b:=\U^{\top}\v_t(\theta_t^{\rm P})$ and $\z:=\v_t(\theta_t^{\rm P})-\U\b$. The subsequent addition of $\gamma_t\v_t(\theta_t^{\rm P})\v_t(\theta_t^{\rm P})^{\top}$ is handled exactly as a rank-one addition above. If $\z=0$, then $\U_{t+1}=\U$ and $\mathbf{S}_{t+1}=\mathbf{S}+\gamma_t\b\b^{\top}$, and its inverse is obtained from $\W$ by Sherman--Morrison. If $\z\neq0$, set
\begin{align*}
\U_{t+1}
=
\left[\U,\frac{\z}{\Vert{\z}\Vert_2}\right], \qquad \mathbf{S}_{t+1}
=
\begin{pmatrix}
\mathbf{S}+\gamma_t\b\b^{\top}
&
\gamma_t\Vert{\z}\Vert_2\b
\\
\gamma_t\Vert{\z}\Vert_2\b^{\top}
&
\gamma_t\Vert{\z}\Vert_2^2
\end{pmatrix}.
\end{align*}
Its inverse is obtained from $\W$ by Sherman--Morrison for the upper-left block followed by the standard block-inverse formula. Thus, a pairwise step removes one direction from the current image and then possibly adds one new direction.

All changes of basis above use a single orthogonal transformation and at most a one-dimensional extension or deletion, and all updates of the small matrices use rank-one or block-inverse formulas. Recall that a Householder reflector has the form $\mathbf{H}=\I-2\q\q^{\top}$ for a unit vector $\q$. Hence, $\U_t\mathbf{H}=\U_t-2(\U_t\q)\q^{\top}$ can be formed in $O(nr_t)$ arithmetic operations, while applying the corresponding change of coordinates to $\mathbf{S}_t$ and $\W_t$ costs $O(r_t^2)$ operations. Consequently, updating $\U_t$, $\mathbf{S}_t$, and $\W_t$ requires $O(nr_t+r_t^2)$ arithmetic operations and $O(nr_t+r_t^2)$ storage.

The discussion above concerns the maintained low-rank representation of the iterate, whose memory requirement is $O(nr_t)$. When the gradient is formed and stored as a dense matrix, it is often simplest to maintain in addition a dense copy of $\X_t$. This copy is updated from the accepted rank-one or rank-two step in $O(n^2)$ time. It also allows $\langle{\widetilde{\nabla}_t,\X_t}\rangle$ and each dense trial matrix required by the Armijo searches to be computed in $O(n^2)$ time. Since storing a dense gradient already requires $O(n^2)$ memory, the additional dense copy of $\X_t$ does not change the asymptotic memory requirement. If instead the objective and gradient oracles operate directly on low-rank representations, the dense copy may be omitted. 

\subsection{Why is the pairwise step needed?}
\label{subsec:pairwise-need}

We give a simple example illustrating why, unlike standard linearly-convergent
Frank-Wolfe methods over polytopes  \cite{garber2016linearly, lacoste2015global}, the combination of Frank-Wolfe and away
steps alone is not sufficient for obtaining the desired linear decrease.

Consider the Euclidean projection problem with $n=3$
\[
     f(\X)
    :=
    \frac{1}{2}\|\X-\C\|_F^2,
    \qquad
    \C :=
    \begin{pmatrix}
        1 & 0 & 0\\
        0 & 0 & 0\\
        0 & 0 & -1
    \end{pmatrix}.
\]
The unique optimal solution is clearly  $\X^*=\mathbf{e}_1\mathbf{e}_1^\top$. Indeed,
\[
    \nabla f(\X^*)=
    \begin{pmatrix}
        0&0&0\\
        0&0&0\\
        0&0&1
    \end{pmatrix},
\]
and therefore the first-order optimality condition holds over $\mS^n$. Notice
also that the smallest eigenvalue of $\nabla f(\X^*)$ has multiplicity two.

For any sufficiently small $\varepsilon>0$, consider the unit vector
\[
    \v_\varepsilon
    :=
    \begin{pmatrix}
        \sqrt{1-\varepsilon/2-\varepsilon^2/2}\\
        \varepsilon\\
        \sqrt{(\varepsilon-\varepsilon^2)/2}
    \end{pmatrix},
    \qquad
    \X_\varepsilon:=\v_\varepsilon\v_\varepsilon^\top.
\]
For any unit vector $\v=(v_1,v_2,v_3)^\top$,
$f(\v\v^\top)-f(\X^*)=v_2^2+2v_3^2$,
and hence
$f(\X_\varepsilon)-f(\X^*)=\varepsilon$.

Note that since $\X_\varepsilon$ is rank-one, an away step is not possible.
Consider the Frank-Wolfe step. Let
$\nabla_\varepsilon
    :=
    \nabla f(\X_\varepsilon)
    =
    \X_\varepsilon-\C$. 
A direct calculation gives
$    \v_\varepsilon^\top
    \nabla_\varepsilon
    \v_\varepsilon
    =\varepsilon$,
while the characteristic polynomial of $\nabla_\varepsilon$ is
$\lambda^3-\lambda^2-\varepsilon\lambda+\varepsilon^2$. 
Consequently, writing $\varphi=(1+\sqrt{5})/2$, we have that
$\lambda_3(\nabla_\varepsilon)
    =
    -\varphi\varepsilon+O(\varepsilon^2)$.

Thus, with $\widetilde{\nabla}_\varepsilon
    :=
    \nabla_\varepsilon
    -
    \lambda_3(\nabla_\varepsilon)\I$,
the Frank-Wolfe gap satisfies
$
    \langle \widetilde{\nabla}_\varepsilon,\X_\varepsilon\rangle^2
    =
    \Theta(\varepsilon^2)$.
Moreover, if
$\w_\varepsilon$ denotes a bottom eigenvector of $\nabla_\varepsilon$, then
$\w_\varepsilon$ converges, up to normalization and sign, to
\[
    \mathbf{e}_1-\varphi^{-1}\mathbf{e}_2,
\]
and in particular,
\[
    \|\w_\varepsilon\w_\varepsilon^\top-\X_\varepsilon\|_F
    =\Theta(1).
\]
Set $\D_\varepsilon:=\w_\varepsilon\w_\varepsilon^\top-\X_\varepsilon$. Since $\w_\varepsilon$ is a bottom eigenvector of $\nabla_\varepsilon$ and $\widetilde\nabla_\varepsilon\succeq0$,
\[
-\langle\nabla_\varepsilon,\D_\varepsilon\rangle
=\langle\widetilde\nabla_\varepsilon,\X_\varepsilon\rangle
=\Theta(\varepsilon).
\]
Since $f$ is quadratic,
\[
f(\X_\varepsilon+\theta\D_\varepsilon)
=f(\X_\varepsilon)
+\theta\langle\nabla_\varepsilon,\D_\varepsilon\rangle
+\frac{\theta^2}{2}\Vert{\D_\varepsilon}\Vert_F^2.
\]
Hence, for all sufficiently small $\varepsilon$, the exact line-search minimizer is in $(0,1)$ and equals
\[
\theta_\varepsilon^*=
\frac{-\langle\nabla_\varepsilon,\D_\varepsilon\rangle}
{\Vert{\D_\varepsilon}\Vert_F^2}
=\Theta(\varepsilon),
\]
which gives
\[
\begin{split}
    f(\X_\varepsilon)
    -
    \min_{\theta\in[0,1]}
    f\bigl(
        \X_\varepsilon
        +
        \theta(
            \w_\varepsilon\w_\varepsilon^\top-\X_\varepsilon
        )
    \bigr)
    =
    \Theta(\varepsilon^2).
\end{split}
\]
Hence neither the Frank-Wolfe step nor the away step can provide a decrease of
order $\varepsilon$, and thus they are insufficient for obtaining a per-iteration linear decrease. As we shall see in the sequel, introducing the additional pairwise step resolves this issue.

\section{Convergence Rate Analysis}\label{sec:analysis}

Throughout this section, denote $h_t:=f(\X_t)-f^*$ for all $t\geq 1$.

\begin{theorem}\label{thm:main}
Let $(\X_t)_{t\geq1}$ be the sequence produced by Algorithm \ref{alg:newFW}. Without Assumptions \ref{ass:qg} and \ref{ass:optimal-rank}, for every $t\geq2$,
\begin{align}\label{eq:global-sublinear-rate}
h_t
\leq
\frac{8\beta{D}^2}{t+7}.
\end{align}
If, in addition, Assumptions \ref{ass:qg} and \ref{ass:optimal-rank} hold, define
\begin{align}\label{eq:rho}
\rho
:={}
\min\left\{
\frac{1}{2},
\frac{(\sqrt{2}-1)\alpha\lambda_{r^*}^*}
{2560r^*\kappa_{r^*}^2\left({\frac{\beta{D}^2}{2}+\widetilde G}\right)}
\right\},
\end{align}
Then, on every iteration $t$ which is not a drop iteration,
\begin{align}\label{eq:productive-contraction}
h_{t+1}\leq(1-\rho)h_t.
\end{align}
Finally, for every $t\geq1$,
\begin{align}\label{eq:global-linear-rate}
h_t
\leq
h_1(1-\rho)^{\lceil(t-1)/2\rceil}.
\end{align}
\end{theorem}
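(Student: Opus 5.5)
The proof has three parts: per-step decrease lemmas, a sublinear bound from the Frank-Wolfe step, and a linear bound from a geometric lower bound on $\max\{a_t,b_t,c_t\}$.

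\textbf{Per-step decrease.} I first establish a sufficient-decrease bound for each of the three Armijo searches. The tool is the smoothness inequality \eqref{eq:smooth-upper} along each path $\Phi_t^i$.

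For the Frank-Wolfe path, the directional derivative at $\theta=0$ is $-\langle\widetilde\nabla_t,\X_t\rangle=-2\delta_t^{\rm FW}$ and the curvature is at most $\beta D^2$. Hence the Armijo condition \eqref{eq:armijoCond} holds for every $\theta\le \min\{1,\langle\widetilde\nabla_t,\X_t\rangle/(\beta D^2)\}$. The backtracking then returns a $\theta$ at least half of that, which gives
\[
f(\X_t)-f(\X_t^{\rm FW})\ge c\min\{\sqrt{a_t},\,a_t/(\beta D^2)\}.
\]
The away path is handled the same way. Its derivative is $-2\delta_t^{\rm A}$, and $s_t\Vert\X_t-\v_{t,-}\v_{t,-}^\top\Vert\le D$ because the full step is feasible. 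This gives decrease at least $c\min\{\delta_t^{\rm A},\,b_t/(\beta D^2)\}$, or the step is a drop.

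For the pairwise path, $\Phi^{\rm P}_t(\theta)=\X_t+\gamma_t(\v_t(\theta)\v_t(\theta)^\top-\u_t\u_t^\top)$. Its derivative at $0$ equals $-2\gamma_t\Vert(\I-\u_t\u_t^\top)\widetilde\nabla_t\u_t\Vert_2=-2\delta^{\rm P}_t$, and $\gamma_t\Vert(\I-\u_t\u_t^\top)\widetilde\nabla_t\u_t\Vert_2^2=c_t$. The second derivative is bounded by $\beta\gamma_t^2\cdot O(D^2)$ plus a term $\gamma_t\cdot O(\widetilde G)$ coming from the curvature of $\theta\mapsto\v_t(\theta)\v_t(\theta)^\top$. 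This is where $\widetilde G$ enters. Using $\gamma_t\le 1$, I expect decrease at least $c\min\{\delta_t^{\rm P},\,c_t/(\beta D^2+\widetilde G)\}$.

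\textbf{Sublinear rate.} Every non-drop iteration decreases at least as much as the FW Armijo point, since we take the minimizer. Combined with $h_t\le\langle\widetilde\nabla_t,\X_t\rangle$ (convexity), the FW bound gives $h_{t+1}\le h_t-c\min\{h_t,h_t^2/(\beta D^2)\}$, and the standard induction yields \eqref{eq:global-sublinear-rate}. Drop iterations are only monotone, so I must handle them as well. Either I note that the away step with $\theta=1$ also satisfies Armijo and decrease is never negative, which suffices for the induction, or, if the constants demand it, I bound the number of drop steps by the rank-counting argument below.

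\textbf{Linear rate: geometric lower bound.} This is the main step and I expect it to be the main obstacle: proving
\[
\max\{a_t,b_t,c_t\}\gtrsim \frac{\alpha\lambda_{r^*}^*}{r^*\kappa_{r^*}^2}\,h_t,
\]
together with controlling the $\min\{\cdot\}$ branches so that all cases give $h_{t+1}\le(1-\rho)h_t$. The plan runs as follows.

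First, take $\X^*=\V^*\mathbf S\V^{*\top}$ closest to $\X_t$ (Lemma \ref{lem:common-face}). By convexity, $h_t\le\langle\widetilde\nabla_t,\X_t-\X^*\rangle$.

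Next, split $\X_t-\X^*$ using the projection $\P$ onto $\mathrm{Im}(\V^*)$ into three pieces:
\begin{itemize}
\item the part of $\X_t$ outside the optimal face, $\langle\widetilde\nabla_t,(\I-\P)\X_t(\I-\P)\rangle$ and the cross terms;
\item a "mass transfer" part within $\mathrm{Im}(\X_t)$, controlled by the away certificate $b_t$ and the FW gap $a_t$;
\item a "rotation" part, in which $\mathrm{Im}(\X_t)$ must be rotated toward $\mathrm{Im}(\V^*)$. This piece is exactly what the example in Section \ref{subsec:pairwise-need} shows FW and away steps miss.
\end{itemize}

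For the rotation piece, I write the relevant inner product as $\langle\widetilde\nabla_t,\X_t^{1/2}\M\X_t^{1/2}\rangle$-type expressions. Cauchy--Schwarz in the $\X_t^\dagger$-weighted geometry then bounds it by $\sqrt{c_t}$ times $\Vert\X_t-\X^*\Vert_F$ up to factors $\sqrt{r^*}$ and $1/\sqrt{\lambda_{r^*}^*}$. The factor $\lambda_{r^*}^*$ enters because $\X^*$ has its smallest nonzero eigenvalue at least $\lambda^*_{r^*}$, which bounds the needed $\X_t^\dagger$-norm of the target directions.

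Then I use quadratic growth, $\Vert\X_t-\X^*\Vert_F\le\kappa_{r^*}\Vert\X_t-\X^*\Vert$, with rank$\le 2r^*$ after restricting to the relevant subspace, and this is at most $\kappa_{r^*}\sqrt{2h_t/\alpha}$. This converts $h_t\lesssim\sqrt{\max\{a_t,b_t,c_t\}}\sqrt{h_t}\cdot(\dots)$ into the displayed bound.

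\textbf{Linear rate: contraction.} Plugging the geometric bound into the per-step decrease, and bounding $h_t\le \beta D^2/2+\widetilde G$ type quantities to handle the non-quadratic branches, gives \eqref{eq:productive-contraction} with the stated $\rho$. The $\min$ with $1/2$ covers the branch $\sqrt{a_t}\ge h_t$.

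Finally, for \eqref{eq:global-linear-rate}: a drop reduces rank by one, while each non-drop step raises rank by at most one. Since $\rank(\X_1)=1$, the number of drop iterations among the first $t-1$ is at most the number of non-drop ones. So at least $\lceil (t-1)/2\rceil$ iterations contract, and monotonicity handles the rest.
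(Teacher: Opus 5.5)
Your per-step decrease bounds, the Frank--Wolfe recursion for the sublinear rate, and the rank-counting argument match the paper. The step you flag as the main obstacle, $\max\{a_t,b_t,c_t\}\gtrsim\frac{\alpha\lambda_{r^*}^*}{r^*\kappa_{r^*}^2}h_t$, is not established, and the decomposition you sketch does not work as stated.

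\textbf{The outside-face piece has no certificate.} Splitting with the projection $\P$ onto the optimal face leaves $\langle\widetilde\nabla_t,(\I-\P)\X_t(\I-\P)\rangle$ and the cross terms uncontrolled. The matrix $(\I-\P)\X_t(\I-\P)$ is not supported on $\textrm{Im}(\X_t)$, so $\v_{t,-}$, and hence $a_t,b_t$, does not see it. Bounding it by $\widetilde G$ times its trace gives a term of order $\widetilde G d$ that no certificate absorbs, which destroys linearity. In the example of Section~\ref{subsec:pairwise-need}, these pieces are $\Theta(h_t)$ while $a_t=\Theta(h_t^2)$ and $b_t=0$. So they must be charged to $c_t$ and cannot be separated from the ``rotation''.

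\textbf{How the paper splits instead.} It writes $\X_t=\X_{t,r^*}+\matE$, the top-$r^*$ eigenpart plus the eigen-tail. Since $\matE\succeq0$ lives in $\textrm{Im}(\X_t)$, we get $\langle\widetilde\nabla_t,\matE\rangle\le(\v_{t,-}^\top\widetilde\nabla_t\v_{t,-})\trace(\matE)$. Ky Fan then gives $\trace(\matE)\le\trace(\P^*(\X^*-\X_t))\le\sqrt{r^*}\kappa_{r^*}d$. The remainder $\X_{t,r^*}-\X^*$ has rank at most $2r^*$, and this is what legitimately brings in $\kappa_{r^*}$; the matrix $\X_t-\X^*$ itself need not have rank $\le 2r^*$.

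\textbf{The rotation term.} The paper factors $\X_{t,r^*}=\Y\Y^\top$ and $\X^*=\Z\Z^\top$, with $\Z$ Procrustes-aligned, and sets $\D=\Y-\Z$.
\begin{itemize}
\item Because $\widetilde\nabla_t\succeq0$, the term $-\langle\widetilde\nabla_t,\D\D^\top\rangle$ can be dropped, giving $\langle\widetilde\nabla_t,\X_{t,r^*}-\X^*\rangle\le2\Vert\widetilde\nabla_t\Y\Vert_F\Vert\D\Vert_F$.
\item Lemma~\ref{lem:factor-comparison} gives $\Vert\D\Vert_F^2\le\frac{2\kappa_{r^*}^2}{(\sqrt2-1)\lambda_{r^*}^*}d^2$; this is where $\lambda_{r^*}^*$ actually enters.
\item Finally, $\Vert\widetilde\nabla_t\Y\Vert_F^2=\trace(\widetilde\nabla_t^2\X_{t,r^*})\le r^*\lambda_1(\widetilde\nabla_t\X_t\widetilde\nabla_t)$.
\end{itemize}
Restricting to a rank-$r^*$ factor is essential: Cauchy--Schwarz against $\widetilde\nabla_t\X_t^{1/2}$ would give $\rank(\X_t)$ in place of $r^*$.

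\textbf{The bound is on the ratio, not on $c_t$.} This argument bounds things by $\lambda_1(\widetilde\nabla_t\X_t\widetilde\nabla_t)=\u_t^\top\widetilde\nabla_t^2\u_t/\u_t^\top\X_t^\dagger\u_t$. So your claim that the rotation piece is bounded by $\sqrt{c_t}$ times a distance is false in general: $c_t=0$ whenever $\u_t$ is an eigenvector of $\widetilde\nabla_t$, while the ratio and the rotation term need not vanish. The missing step combines $\u_t^\top\X_t^\dagger\u_t\ge1$ with $(\u_t^\top\widetilde\nabla_t\u_t)^2\le(\v_{t,-}^\top\widetilde\nabla_t\v_{t,-})^2\le4\max\{a_t,b_t\}$. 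Together these give ratio $\le c_t+4\max\{a_t,b_t\}$, which is the case split in Lemma~\ref{lem:certificate-lower}.

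\textbf{Sublinear part.} Monotonicity on drop iterations is not enough by itself. The induction only gives $h\le 4\beta D^2/(k+4)$ after $k$ non-drop iterations. The rank-counting bound $N_t\ge\lceil(t-1)/2\rceil$ is needed in every case, not only ``if the constants demand it'', to get a rate in $t$. That is exactly where $8\beta D^2/(t+7)$ comes from.
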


The sublinear rate \eqref{eq:global-sublinear-rate} is of the same order as the standard Frank-Wolfe, i.e., $\Theta(\beta{D}^2/t)$. For the Frobenius norm, $\Vert{\cdot}\Vert=\Vert{\cdot}\Vert_*=\Vert{\cdot}\Vert_F$,  ${D}=\sqrt{2}$ and $\kappa_{r^*}=1$, so the denominator in the second term of \eqref{eq:rho} is simply $r^*(\beta+\widetilde G)$. If the primal norm is the nuclear norm, so that $\Vert{\cdot}\Vert_*=\Vert{\cdot}\Vert_2$ is the spectral norm, then ${D}=2$ and again $\kappa_{r^*}=1$, yielding again $r^*(2\beta+\widetilde G)$ in the corresponding denominator. 

\subsection{Proof of Theorem \ref{thm:main}}
The proof is structured as follows.
\begin{itemize}
\item
Lemma \ref{lem:support-armijo} gives a standard argument that the objective decrease for the FW or away steps is proportional to the quantities $a_t$ and $b_t$ (defined in \eqref{eq:algorithm-certificates}), respectively.
\item
Lemma \ref{lem:pairwise-decrease} develops the same type of objective decrease argument, with respect to the certificate $c_t$ (defined in \eqref{eq:algorithm-certificates}), for the pairwise step. In particular, the proof guides the reader as to why we define the pairwise step as we do.  
\item
Lemma \ref{lem:certificate-lower} is the central lemma that establishes that indeed one of the three certificates (in case of a non-drop-step) must result in sufficient objective decrease, i.e., $\max\{a_t,b_t,c_t\} = \Omega(h_t)$.
\item
Finally, the proof of Theorem \ref{thm:main} combines the above ingredients to yield the linear convergence rate. The sublinear rate follows from standard arguments and mostly mirrors the standard Frank-Wolfe analysis. 
\end{itemize}
Throughout this subsection, we use the notation of Algorithm \ref{alg:newFW} and set
\begin{align}\label{eq:fixed-notation}
L:=\frac{\beta{D}^2}{2}+\widetilde G.
\end{align}
Note that since $\X_t\succeq0$ and $\trace(\X_t)=1$, and by the definition of $\v_{t,-}$,
\begin{align}\label{eq:support-bounds-upper}
0
\leq
\langle{\widetilde{\nabla}_t,\X_t}\rangle
\leq
\v_{t,-}^{\top}\widetilde{\nabla}_t\v_{t,-}
\leq
\Vert{\widetilde{\nabla}_t}\Vert_2
\leq
\widetilde G.
\end{align}

\begin{lemma}[objective decrease for FW / away steps]\label{lem:support-armijo}
The Frank-Wolfe Armijo search terminates and
\begin{align}\label{eq:fw-armijo-decrease}
f(\X_t)-f(\X_t^{\rm FW})
\geq
\frac{\langle{\widetilde{\nabla}_t,\X_t}\rangle^2}{8L}.
\end{align}
The away Armijo search terminates. If the full step is accepted ($\theta_t^{\rm{A}} = 1$) it is a drop step; otherwise, 
\begin{align}\label{eq:away-armijo-decrease}
f(\X_t)-f(\X_t^{\rm A})
\geq
\frac{
\left({
\v_{t,-}^{\top}\widetilde{\nabla}_t\v_{t,-}
-
\langle{\widetilde{\nabla}_t,\X_t}\rangle
}\right)^2
}{8L}.
\end{align}
\end{lemma}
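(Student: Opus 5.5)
The plan is the standard Armijo backtracking argument, applied to each of the two paths separately. It combines the smoothness upper bound \eqref{eq:smooth-upper} with the directional-derivative identity $\langle{\widetilde{\nabla}_t,\Y-\X}\rangle=\langle{\nabla_t,\Y-\X}\rangle$ for feasible $\X,\Y$.

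\textbf{Frank-Wolfe step.} Let $g:=\langle{\widetilde{\nabla}_t,\X_t}\rangle$. If $g=0$, then $\delta_t^{\rm FW}=0$, so $\theta=1$ is accepted trivially and the bound \eqref{eq:fw-armijo-decrease} is vacuous. If $g>0$, the direction $\D:=\v_{t,+}\v_{t,+}^{\top}-\X_t$ satisfies
\[
\langle{\nabla_t,\D}\rangle=\v_{t,+}^{\top}\widetilde{\nabla}_t\v_{t,+}-g=-g
\qquad\text{and}\qquad
\Vert{\D}\Vert\leq D.
\]
The smoothness inequality then gives $f(\Phi_t^{\rm FW}(\theta))\leq f(\X_t)-\theta g+\frac{\beta D^2}{2}\theta^2$. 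Hence the Armijo condition with $\delta_t^{\rm FW}=g/2$ holds for every $\theta\leq g/(\beta D^2)$, and in particular for every $\theta\leq g/(2L)$, because $2L\geq\beta D^2$.

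Since $g\leq\widetilde G\leq L$ by \eqref{eq:support-bounds-upper}, we have $g/(2L)\leq 1/2$. The dyadic search therefore terminates with either $\theta=1$ or some $\theta>g/(4L)$: the first trial point of $\Theta$ below $g/(2L)$ is at least half of it. In both cases the decrease is at least $\theta g/2\geq g^2/(8L)$.

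\textbf{Away step.} Let $e:=\v_{t,-}^{\top}\widetilde{\nabla}_t\v_{t,-}-g\geq0$. The degenerate cases, namely rank one or $e=0$, are handled by the stated conventions. Otherwise the direction is $\D_A=s_t(\X_t-\v_{t,-}\v_{t,-}^{\top})$, which satisfies $\langle{\nabla_t,\D_A}\rangle=-s_te$. Because $\Phi_t^{\rm A}(1)$ is feasible by Lemma~\ref{lem:feasibility}, we get $\Vert{\D_A}\Vert=\Vert{\Phi_t^{\rm A}(1)-\X_t}\Vert\leq D$. Smoothness then yields
\[
f(\Phi_t^{\rm A}(\theta))\leq f(\X_t)-\theta s_te+\frac{\beta D^2}{2}\theta^2 ,
\]
so the Armijo condition with $\delta_t^{\rm A}=s_te/2$ holds for all $\theta\leq s_te/(\beta D^2)$. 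This shows the search terminates.

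If $\theta_t^{\rm A}=1$, the step reduces the rank by Lemma~\ref{lem:stepsize}, so it is a drop step. Otherwise the accepted $\theta$ satisfies $\theta\geq\frac12\min\{1,s_te/(\beta D^2)\}$, and since $\theta<1$ it in fact satisfies $\theta\geq s_te/(2\beta D^2)$. The resulting decrease is at least
\[
\theta\,\frac{s_te}{2}\geq\frac{(s_te)^2}{4\beta D^2}.
\]

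\textbf{Main obstacle.} The factor $s_t$ can be small, so the previous bound does not directly give $e^2/(8L)$. The remedy is to exploit the fact that a rejected step $\theta=2\theta_t^{\rm A}\leq1$ violated the Armijo condition, and so to bound the curvature term better than $D^2$. Here I plan to use $\Vert{\D_A}\Vert\leq s_t\Vert{\X_t-\v_{t,-}\v_{t,-}^{\top}}\Vert\leq s_tD$, which holds since both $\X_t$ and $\v_{t,-}\v_{t,-}^{\top}$ lie in $\mS^n$. With this, smoothness gives
\[
f(\Phi_t^{\rm A}(\theta))\leq f(\X_t)-\theta s_te+\frac{\beta D^2}{2}s_t^2\theta^2 ,
\]
so Armijo holds for all $\theta\leq e/(\beta D^2s_t)$. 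A non-full accepted step then satisfies $\theta s_t\geq e/(2\beta D^2)\geq e/(4L)$, and the decrease is at least $\theta s_te/2\geq e^2/(8L)$, as required.

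The same refined curvature bound also improves the Frank-Wolfe case, which then needs no separate treatment of $\theta=1$ beyond the bound $g\leq L$ already used. If the constants turn out slightly off, the $\widetilde G$ part of $L$ gives the slack: it bounds $e$ and $g$, which is exactly what the full-step case needs.
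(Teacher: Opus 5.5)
Your proof is correct and is essentially the paper's argument. You use the trace-zero identity $\langle{\nabla_t,\D}\rangle=\langle{\widetilde{\nabla}_t,\D}\rangle$ and the smoothness bound, with $\Vert{\D}\Vert\leq D$ for the Frank-Wolfe direction and $\Vert{\D}\Vert\leq s_tD$ for the away direction, followed by the dyadic backtracking estimate; your use of $g/(2L)\leq 1/2$ to absorb the full Frank-Wolfe step is just a compact version of the paper's separate full-step case.

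One small slip: when $g=0$ the full step is not ``trivially accepted'', since Armijo with $\delta_t^{\rm FW}=0$ still requires $f(\Phi_t^{\rm FW}(1))\leq f(\X_t)$. This is harmless, because $g=0$ forces $a_t=b_t=c_t=0$, so the algorithm terminates before any search is run.
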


\begin{proof}
For the Frank-Wolfe direction $\D=\v_{t,+}\v_{t,+}^{\top}-\X_t$, we have $\trace(\D)=0$ and hence
\begin{align*}
\langle{\nabla_t,\D}\rangle
=
\langle{\widetilde{\nabla}_t,\D}\rangle
=-\langle{\widetilde{\nabla}_t,\X_t}\rangle,
\qquad
\Vert{\D}\Vert\leq{D}.
\end{align*}
Thus, by the smoothness of $f$,
\begin{align*}
f(\X_t+\theta\D)
\leq
f(\X_t)
-\theta\langle{\widetilde{\nabla}_t,\X_t}\rangle
+\frac{\beta{D}^2}{2}\theta^2.
\end{align*}
Every $0<\theta\leq \langle{\widetilde{\nabla}_t,\X_t}\rangle/(\beta{D}^2)$
satisfies the Armijo condition \eqref{eq:armijoCond}. If the full step is not accepted, the dyadic backtracking therefore returns $\theta> \langle{\widetilde{\nabla}_t,\X_t}\rangle/(2\beta{D}^2)$,
and \eqref{eq:fw-armijo-decrease} follows since $L\geq\beta{D}^2/2$. If the full step is accepted, the same bound follows from \eqref{eq:support-bounds-upper} and $\widetilde G\leq L$.

For the away step, if $b_t=0$ or the path is degenerate, the claim follows immediately from the Armijo condition \eqref{eq:armijoCond} and the conventions above. Otherwise, let
\begin{align*}
\D
:=
\frac{\X_t-\v_{t,-}\v_{t,-}^{\top}}
{\v_{t,-}^{\top}\X_t^{\dagger}\v_{t,-}-1}.
\end{align*}
The direction $\D$ also has zero trace, and therefore
\begin{align*}
\langle{\nabla_t,\D}\rangle
&=
\langle{\widetilde{\nabla}_t,\D}\rangle
=-
\frac{
\v_{t,-}^{\top}\widetilde{\nabla}_t\v_{t,-}
-
\langle{\widetilde{\nabla}_t,\X_t}\rangle
}{
\v_{t,-}^{\top}\X_t^{\dagger}\v_{t,-}-1
}, \quad 
\Vert{\D}\Vert
\leq
\frac{{D}}
{\v_{t,-}^{\top}\X_t^{\dagger}\v_{t,-}-1}.
\end{align*}

Hence, using again the smoothness of $f$,
\begin{align*}
f(\X_t+\theta\D)
\leq
f(\X_t)
&-
\theta
\frac{
\v_{t,-}^{\top}\widetilde{\nabla}_t\v_{t,-}
-
\langle{\widetilde{\nabla}_t,\X_t}\rangle
}{
\v_{t,-}^{\top}\X_t^{\dagger}\v_{t,-}-1
}
+
\frac{\beta{D}^2\theta^2}{2}
\frac{1}
{\left({\v_{t,-}^{\top}\X_t^{\dagger}\v_{t,-}-1}\right)^2}.
\end{align*}
The same dyadic argument gives \eqref{eq:away-armijo-decrease} whenever the full step is not accepted. If the full step is accepted, it is a drop step.
\end{proof}

\begin{lemma}[objective decrease for pairwise step]\label{lem:pairwise-decrease}
Let $\u\in\textrm{Im}(\X_t)$ be a unit vector such that $(\I-\u\u^{\top})\widetilde\nabla_t\u \neq \mathbf{0}$
and consider the pairwise  Armijo search defined in \eqref{eq:armijoCond} (w.r.t. $\Phi_t^{\rm{P}}(\theta)$ with $\v_t(\theta)$ and $\p_t$ defined according to $\u$ in place of $\u_t$). Denoting its output by $\X_t^{\rm P}$, we have that
\begin{align}\label{eq:pairwise-decrease}
f(\X_t)-f(\X_t^{\rm P})
\geq
\frac{1}{4L}
\frac{\u^{\top}\widetilde{\nabla}_t^2\u-(\u^{\top}\widetilde{\nabla}_t\u)^2}
{\u^{\top}\X_t^{\dagger}\u}.
\end{align}
\end{lemma}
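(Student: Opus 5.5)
The plan is to reduce everything to a one-dimensional smoothness estimate along the curve $\theta\mapsto\Phi_t^{\rm P}(\theta)$, as in the proof of Lemma \ref{lem:support-armijo}, and then run the dyadic Armijo argument. Write $\gamma:=(\u^\top\X_t^\dagger\u)^{-1}$, $g:=\Vert{(\I-\u\u^\top)\widetilde\nabla_t\u}\Vert_2$, and $\p$ for the normalized version of this vector. The first observation is the Pythagorean identity
\begin{align*}
g^2=\u^\top\widetilde\nabla_t^2\u-(\u^\top\widetilde\nabla_t\u)^2 .
\end{align*}
It holds because $\widetilde\nabla_t\u$ splits into its component $(\u^\top\widetilde\nabla_t\u)\u$ along $\u$ and an orthogonal remainder. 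Hence the right-hand side of \eqref{eq:pairwise-decrease} equals $\gamma g^2/(4L)$, and the Armijo coefficient is $\delta_t^{\rm P}=\gamma g$. Two a priori bounds will be used: $\gamma\le1$, since $\u^\top\X_t^\dagger\u\ge1/\lambda_1(\X_t)\ge1$; and $g\le\Vert{\widetilde\nabla_t\u}\Vert_2\le\widetilde G\le L$.

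Next I would expand the displacement explicitly. Since $\p\perp\u$,
\begin{align*}
\v(\theta)\v(\theta)^\top-\u\u^\top=\frac{-\theta(\u\p^\top+\p\u^\top)+\theta^2(\p\p^\top-\u\u^\top)}{1+\theta^2}.
\end{align*}
Because the displacement has zero trace, its inner product with $\nabla_t$ equals that with $\widetilde\nabla_t$. Using $\p^\top\widetilde\nabla_t\u=g$, $0\le\p^\top\widetilde\nabla_t\p\le\widetilde G$ and $\u^\top\widetilde\nabla_t\u\ge0$ (recall $\widetilde\nabla_t\succeq0$), the first-order term is at most
\begin{align*}
\gamma\,\frac{-2\theta g+\theta^2\widetilde G}{1+\theta^2}.
\end{align*}
For the norm, $\v\v^\top-\u\u^\top$ is a rank-two, trace-zero matrix. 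Its eigenvalues are $\pm\sin\phi$, where $\sin\phi=\theta/\sqrt{1+\theta^2}\le\theta$. By unitary invariance its norm is $\sin\phi\,\Vert{\e_1\e_1^\top-\e_2\e_2^\top}\Vert\le\theta D$. Therefore $\Vert{\Phi_t^{\rm P}(\theta)-\X_t}\Vert\le\gamma\theta D$. Plugging these into \eqref{eq:smooth-upper} and using $\gamma^2\le\gamma$ gives
\begin{align*}
f(\Phi_t^{\rm P}(\theta))\le f(\X_t)-\gamma\,\frac{2\theta g-\theta^2\widetilde G}{1+\theta^2}+\gamma\frac{\beta D^2}{2}\theta^2 .
\end{align*}
Feasibility of every trial point is already given by Lemma \ref{lem:feasibility}.

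The main obstacle is constant bookkeeping. The Armijo test demands a decrease of at least $\theta\gamma g$, the full first-order rate up to a factor $1/2$ of the $2\theta g$ term, so the $1/(1+\theta^2)$ loss must be controlled carefully. I would write $2\theta g/(1+\theta^2)=2\theta g-2\theta^3g/(1+\theta^2)$ and bound the correction by $\theta^2 g\le\theta^2\widetilde G$ for $\theta\le1$. The goal is an inequality of the form
\begin{align*}
f(\Phi_t^{\rm P}(\theta))\le f(\X_t)-2\gamma\theta g+c\,\gamma\theta^2L
\end{align*}
with a small constant $c$. Then every $\theta\le g/(cL)$ satisfies the Armijo condition, i.e.\ $f\le f(\X_t)-\theta\gamma g$. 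The full step $\theta=1$ either is accepted, in which case the decrease is at least $\gamma g\ge\gamma g^2/L$ by $g\le L$; or it is rejected, in which case dyadic backtracking returns $\theta>g/(2cL)$ and the decrease is at least $\gamma g^2/(2cL)$. To reach the stated constant $1/(4L)$ I need $c=2$. This is where I would spend the most care: keep the favorable $-\theta^2\u^\top\widetilde\nabla_t\u$ term, and note that $\p^\top\widetilde\nabla_t\p-\u^\top\widetilde\nabla_t\u\le\widetilde G$ while $g^2\le \widetilde G\,\u^\top\widetilde\nabla_t\u$ can absorb the cubic correction. If that fails, the same argument still yields the bound with a slightly worse universal constant.
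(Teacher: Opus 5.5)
Your proposal is correct and follows essentially the paper's argument: a smoothness bound along the rotation path $\X_t+\gamma(\v(\theta)\v(\theta)^\top-\u\u^\top)$, the explicit expansion of the displacement with $\p^\top\widetilde\nabla_t\u=g$, the bounds $\gamma\le1$ and $g\le L$, and dyadic backtracking from the threshold $\theta\le g/(2L)$. The constant bookkeeping you were unsure about already closes with the bound you wrote down. Since $2\theta\le1+\theta^2$, you have $2\theta^3g/(1+\theta^2)\le\theta^2 g\le\theta^2\widetilde G$, so the quadratic terms total at most $\gamma\theta^2(2\widetilde G+\beta D^2/2)\le2\gamma\theta^2L$. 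This gives exactly $c=2$ and the stated $1/(4L)$, so neither the $g^2\le\widetilde G\,\u^\top\widetilde\nabla_t\u$ trick nor a fallback to a weaker constant is needed. The paper instead keeps the $1/(1+\theta^2)$ factors and checks $L\theta+g\theta^2\le g$ for $\theta\le g/(2L)$.
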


\begin{proof}
In the following set $\gamma = (\u^{\top}\X_t^{\dagger}\u)^{-1}$ and let $\p$ be some unit vector orthogonal to $\u$ (indeed in Algorithm \ref{alg:newFW}, we have $\p_t \perp \u_t$). For any $\theta\in[0,1]$, define $\v(\theta):=\frac{\u-\theta\p}{\sqrt{1+\theta^2}}$.
Let $\theta\in(0,1]$ and denote $\D := \v(\theta)\v(\theta)^{\top} - \u\u^{\top}$. Note that $\D$ is rank-two and that $\trace(\D) =0$, and so its two nonzero eigenvalues are given by $\pm\lambda$, for some $\lambda > 0$. In particular,
\begin{align*}
2\lambda^2 = \Vert{\D}\Vert_F^2 = 2(1-(\v(\theta)^{\top}\u)^2) = 2\left({1-\frac{1}{1+\theta^2}}\right) = 2\frac{\theta^2}{1+\theta^2}.
\end{align*}
Thus,  $\lambda = \theta/\sqrt{1+\theta^2}$ and, by unitary invariance and the definition of ${D}$, $\Vert{\D}\Vert
\leq
\frac{\theta}{\sqrt{1+\theta^2}}D$.

By the smoothness of $f$ we have that,
\begin{align}\label{eq:lem:pw:1}
f(\X_t + \gamma\D) &\leq f(\X_t) + \gamma\langle{\D, \nabla_t}\rangle + \frac{\beta{}D^2\gamma^2\theta^2}{2(1+\theta^2)} \nonumber \\
&= f(\X_t) + \gamma\langle{\D, \widetilde\nabla_t}\rangle + \frac{\beta{}D^2\gamma^2\theta^2}{2(1+\theta^2)},
\end{align}
where the equality follows since $\trace(\D) = 0$.

Write now $\D$ as:
\begin{align*}
\D =  \v(\theta)\v(\theta)^{\top} - \u\u^{\top} &= \frac{1}{1+\theta^2}\left({\u\u^{\top} + \theta^2\p\p^{\top} - \theta\p\u^{\top} - \theta\u\p^{\top}}\right) - \u\u^{\top} \\
&= \frac{1}{1+\theta^2}\left({\theta^2\p\p^{\top} - \theta^2\u\u^{\top} - \theta\p\u^{\top} - \theta\u\p^{\top}}\right). 
\end{align*}
This gives,
\begin{align}\label{eq:lem:pw:2}
\langle{\D, \widetilde\nabla_t}\rangle &= \frac{1}{1+\theta^2}\left({-2\theta\p^{\top}\widetilde\nabla_t\u + \theta^2\p^{\top}\widetilde\nabla_t\p - \theta^2\u^{\top}\widetilde\nabla_t\u}\right) \nonumber \\
&\leq  -\frac{2\theta}{1+\theta^2}\p^{\top}\widetilde\nabla_t\u + \frac{\theta^2}{1+\theta^2}\widetilde{G},
\end{align}
where the inequality uses the fact that $\widetilde\nabla_t \succeq 0$ and the definition of $\widetilde{G}$.

Note that the RHS of \eqref{eq:lem:pw:2} readily explains the choice of the vector $\p$, given the vector $\u$: among the unit vectors orthogonal to $\u$, choose the one that maximizes the inner product $\p^{\top}(\widetilde\nabla_t\u)$, which in turn minimizes the upper-bound on $\langle{\D, \widetilde\nabla_t}\rangle$ (which goes into the function value reduction argument in \eqref{eq:lem:pw:1}). Indeed, for a given $\u$, this maximizer is given by the value
\begin{align}\label{eq:lem:pw:3}
\p:=
\frac{(\I-\u\u^{\top})\widetilde\nabla_t\u}
{\Vert{(\I-\u\u^{\top})\widetilde\nabla_t\u}\Vert_2},
\end{align}
which is exactly the choice of $\p_t$ in our Algorithm \ref{alg:newFW} when using $\u$ in place of $\u_t$.

Assume for the rest of the proof that $\p$ is indeed given by \eqref{eq:lem:pw:3} and note that with this choice,
\begin{align*}
\langle{\D, \widetilde\nabla_t}\rangle &\leq -\frac{2\theta}{1+\theta^2}\Vert{(\I-\u\u^{\top})\widetilde\nabla_t\u}\Vert_2 +  \frac{\theta^2}{1+\theta^2}\widetilde{G},
\end{align*}
and plugging-back into \eqref{eq:lem:pw:1}, and using the fact that $\gamma^2 \leq \gamma \leq 1$ we have that,
\begin{align*}
f(\X_t + \gamma\D) &\leq f(\X_t) -\frac{2\theta\gamma}{1+\theta^2}\Vert{(\I-\u\u^{\top})\widetilde\nabla_t\u}\Vert_2 + \frac{L\gamma\theta^2}{1+\theta^2}.
\end{align*}

Since $\gamma \leq 1$ and $\Vert{(\I-\u\u^{\top})\widetilde\nabla_t\u}\Vert_2 \leq \Vert{\widetilde{\nabla}_t}\Vert_2 \leq L$,
every $0<\theta\leq \Vert{(\I-\u\u^{\top})\widetilde\nabla_t\u}\Vert_2/(2L)$
satisfies
\begin{align*}
f\left({\X_t+\gamma\D}\right)
\leq
f(\X_t)
-\theta\gamma\Vert{(\I-\u\u^{\top})\widetilde\nabla_t\u}\Vert_2.
\end{align*}
Thus, dyadic backtracking returns $\theta>
\Vert{(\I-\u\u^{\top})\widetilde\nabla_t\u}\Vert_2/(4L)$, which gives
\begin{align}\label{eq:lem:pw:4}
f(\X_t)-f(\X_t+\gamma\D)
&>
\frac{\gamma}{4L}
\Vert{(\I-\u\u^{\top})\widetilde\nabla_t\u}\Vert_2^2= \frac{1}{4L}
\frac{\u^{\top}\widetilde{\nabla}_t^2\u-(\u^{\top}\widetilde{\nabla}_t\u)^2}
{\u^{\top}\X_t^{\dagger}\u},
\end{align}
where we recall that $\gamma = (\u^{\top}\X_t^{\dagger}\u)^{-1}$.

This proves \eqref{eq:pairwise-decrease}.

Observe that the RHS of \eqref{eq:lem:pw:4} explains the choice of $\u_t$ in Algorithm \ref{alg:newFW}. Maximizing the entire RHS as a function of such $\u$ does not seem to admit a simple solution, but as an approximation (which indeed works well enough), we settle for maximizing only the first term, i.e., taking
\begin{align*}
\u\in\argmax_{\w\in\rm{Im}(\X_t):\Vert{\w}\Vert_2=1}\frac{\w^{\top}\widetilde{\nabla}_t^2\w}
{\w^{\top}\X_t^{\dagger}\w},
\end{align*}
which corresponds precisely to the choice of $\u_t$ in the algorithm.
\end{proof}

The following auxiliary lemma is required for the proof of the lemma following it. A proof is given in the appendix for completeness. 
\begin{lemma}[see also Lemma 5.4 in \cite{tu2016low}]\label{lem:factor-comparison}
Let $\Y,\Z\in\reals^{n\times r}$, and suppose $\rank(\Z)=r$. Then,
\begin{align}\label{eq:factor-comparison}
\min_{\bR\in\reals^{r\times r}:\bR^{\top}\bR=\I}
\Vert{\Y-\Z\bR}\Vert_F^2
\leq
\frac{1}{2(\sqrt{2}-1)\sigma_r(\Z)^2}
\Vert{\Y\Y^{\top}-\Z\Z^{\top}}\Vert_F^2,
\end{align}
where $\sigma_r(\Z)$ denotes the smallest singular value of $\Z$.
\end{lemma}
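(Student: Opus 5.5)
We reduce \eqref{eq:factor-comparison} to a statement about an optimally aligned pair of factors. Let $\bR^*$ attain the minimum on the left-hand side; it exists because the orthogonal group is compact. Replacing $\Z$ by $\Z\bR^*$ changes neither $\Z\Z^{\top}$ nor $\sigma_r(\Z)$, so we may assume $\bR^*=\I$. Write $\H:=\Y-\Z$. The first-order optimality of $\bR=\I$ in the orthogonal Procrustes problem $\max_{\bR}\langle \Y,\Z\bR\rangle$ gives that $\Z^{\top}\Y$ is symmetric positive semidefinite. Indeed, the maximizer comes from the polar decomposition, and optimality at $\I$ forces $\Z^{\top}\Y\succeq0$. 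Consequently $\Z^{\top}\H=\Z^{\top}\Y-\Z^{\top}\Z$ is symmetric.

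Next we expand the right-hand side in terms of $\H$:
\begin{align*}
\Y\Y^{\top}-\Z\Z^{\top}=\Z\H^{\top}+\H\Z^{\top}+\H\H^{\top}.
\end{align*}
The plan is to compute $\Vert{\Z\H^{\top}+\H\Z^{\top}+\H\H^{\top}}\Vert_F^2$ and bound it from below by $2(\sqrt2-1)\sigma_r(\Z)^2\Vert{\H}\Vert_F^2$, following the argument of Tu et al. Expanding the square and using the symmetry of $\Z^{\top}\H$, the cross term $\Vert{\Z\H^{\top}+\H\Z^{\top}}\Vert_F^2$ equals $2\Vert{\Z\H^{\top}}\Vert_F^2+2\trace((\Z^{\top}\H)^2)$. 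This is at least $2\sigma_r(\Z)^2\Vert{\H}\Vert_F^2$, because $\trace((\Z^{\top}\H)^2)=\Vert{\Z^{\top}\H}\Vert_F^2\ge0$ by symmetry. The remaining terms are $\Vert{\H\H^{\top}}\Vert_F^2$ and $2\langle \Z\H^{\top}+\H\Z^{\top},\H\H^{\top}\rangle=4\trace(\Z^{\top}\H\,\H^{\top}\H)$.

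The main obstacle is that this last mixed term can be negative. We will write $\Z^{\top}\Y\succeq0$ as $\Z^{\top}\H\succeq-\Z^{\top}\Z$, which gives $\trace(\Z^{\top}\H\,\H^{\top}\H)\ge-\trace(\Z^{\top}\Z\,\H^{\top}\H)$, but that alone is too weak. Instead we will group all terms as a quadratic form. Set $\mathbf{A}:=\Z^{\top}\H$, which is symmetric, and $\mathbf{B}:=\H^{\top}\H\succeq0$. The total is
\begin{align*}
2\Vert{\Z\H^{\top}}\Vert_F^2+2\Vert{\mathbf{A}}\Vert_F^2+4\trace(\mathbf{A}\mathbf{B})+\Vert{\mathbf{B}}\Vert_F^2,
\end{align*}
since $\Vert{\H\H^{\top}}\Vert_F=\Vert{\mathbf{B}}\Vert_F$. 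We then split $2\Vert{\mathbf{A}}\Vert_F^2+4\trace(\mathbf{A}\mathbf{B})+\Vert{\mathbf{B}}\Vert_F^2\ge-\Vert{\mathbf{B}}\Vert_F^2$, using $2\Vert{\mathbf{A}}\Vert_F^2+4\langle\mathbf{A},\mathbf{B}\rangle\ge-2\Vert{\mathbf{B}}\Vert_F^2$. The resulting $-\Vert{\mathbf{B}}\Vert_F^2$ has to be absorbed by $2\Vert{\Z\H^{\top}}\Vert_F^2$ combined with $\mathbf{A}+\mathbf{Z}^{\top}\Z\succeq0$. Tuning a convex combination of these two lower bounds with a weight chosen optimally produces the constant $2(\sqrt2-1)$. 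We will carry this out by treating separately the regime $\Vert{\H}\Vert_2\lesssim\sigma_r(\Z)$, where the cross term dominates, and the complementary regime, where $\Vert{\H\H^{\top}}\Vert_F^2$ dominates. The optimal balance point between the two regimes is expected to give exactly $\sqrt2-1$, and we expect this constant-tracking step to be the technical core. Finally, dividing by $2(\sqrt2-1)\sigma_r(\Z)^2$ yields \eqref{eq:factor-comparison}.
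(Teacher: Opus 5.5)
\paragraph{Gap: the key inequality is never proved.} Your reduction is correct and is the same as the paper's. After the Procrustes alignment, $\Z^{\top}\Y\succeq0$ and $\mathbf{A}=\Z^{\top}\H$ is symmetric. With $\mathbf{B}=\H^{\top}\H$, the quantity to bound below is $2\trace(\Z^{\top}\Z\mathbf{B})+2\Vert{\mathbf{A}}\Vert_F^2+4\trace(\mathbf{A}\mathbf{B})+\Vert{\mathbf{B}}\Vert_F^2$. However, the proposal stops exactly where the proof has to happen.

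The one concrete step you write, $2\Vert{\mathbf{A}}\Vert_F^2+4\langle\mathbf{A},\mathbf{B}\rangle\geq-2\Vert{\mathbf{B}}\Vert_F^2$, is a dead end. It spends all of $2\Vert{\mathbf{A}}\Vert_F^2$ and the entire mixed term, leaving $2\trace(\Z^{\top}\Z\mathbf{B})-\Vert{\mathbf{B}}\Vert_F^2$, which is negative for large $\H$. For example, $\H=t\Z$ with $t>\sqrt2$ is perfectly aligned, and there the remainder equals $t^2(2-t^2)\Vert{\Z^{\top}\Z}\Vert_F^2<0$. Since $\mathbf{A}$ has been eliminated at that point, the constraint $\mathbf{A}+\Z^{\top}\Z\succeq0$ can no longer repair it.

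The subsequent plan is never executed. You propose a split according to whether $\Vert{\H}\Vert_2\lesssim\sigma_r(\Z)$, with the constant $\sqrt2-1$ only ``expected'' at the balance point. It is also unclear how such a split would control $4\trace(\mathbf{A}\mathbf{B})$, whose negative part is bounded only by $4\trace(\Z^{\top}\Z\mathbf{B})$ and so involves the whole spectrum of $\Z$. As a result, the inequality $\Vert{\Y\Y^{\top}-\Z\Z^{\top}}\Vert_F^2\geq2(\sqrt2-1)\sigma_r(\Z)^2\Vert{\H}\Vert_F^2$, which is the entire content of the lemma, is not established.

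\paragraph{The missing idea.} Divide the mixed term \emph{before} completing the square; no case analysis is needed.
\begin{itemize}
\item Write $4\trace(\mathbf{A}\mathbf{B})=2\sqrt2\,\trace(\mathbf{A}\mathbf{B})+(4-2\sqrt2)\trace(\mathbf{A}\mathbf{B})$.
\item The first piece, together with $\Vert{\mathbf{B}}\Vert_F^2$ and all of $2\Vert{\mathbf{A}}\Vert_F^2$, is exactly $\trace((\mathbf{B}+\sqrt2\mathbf{A})^2)\geq0$. This uses the symmetry of $\mathbf{A}$.
\item For the second piece, $\mathbf{A}+\Z^{\top}\Z=\Z^{\top}\Y\succeq0$ and $\mathbf{B}\succeq0$ give $\trace(\mathbf{A}\mathbf{B})\geq-\trace(\Z^{\top}\Z\mathbf{B})$.
\item Hence the total is at least $\left(2-(4-2\sqrt2)\right)\trace(\Z^{\top}\Z\mathbf{B})=2(\sqrt2-1)\trace(\Z^{\top}\Z\H^{\top}\H)\geq2(\sqrt2-1)\sigma_r(\Z)^2\Vert{\H}\Vert_F^2$.
\end{itemize}

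If you use a general coefficient $c$ in place of $\sqrt2$, you need $c^2\leq2$ for the square to fit inside $2\Vert{\mathbf{A}}\Vert_F^2$, and you end with the factor $2c-2$. So $c=\sqrt2$ is the optimal choice. This is the ``optimally weighted combination'' you alluded to. It is precisely the paper's rewriting of $\Vert{\Y\Y^{\top}-\Z\Z^{\top}}\Vert_F^2-2(\sqrt2-1)\sigma_r(\Z)^2\Vert{\H}\Vert_F^2$ as $\trace((\H^{\top}\H+\sqrt2\H^{\top}\Z)^2)+\trace\bigl(\H^{\top}\H\bigl((4-2\sqrt2)\Y^{\top}\Z+2(\sqrt2-1)(\Z^{\top}\Z-\sigma_r(\Z)^2\I)\bigr)\bigr)$, where both terms are nonnegative.
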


The following lemma is the central replacement for the strict-complementarity argument. It directly lower bounds the largest of the three quantities associated with the steps in Algorithm \ref{alg:newFW}.

\begin{lemma}[certificate lower bound]\label{lem:certificate-lower}
\begin{align}\label{eq:certificate-lower}
\max\Bigg\{
&\langle{\widetilde{\nabla}_t,\X_t}\rangle^2, ~\left({
\v_{t,-}^{\top}\widetilde{\nabla}_t\v_{t,-}
-
\langle{\widetilde{\nabla}_t,\X_t}\rangle
}\right)^2,~
\frac{\u_t^\top\widetilde{\nabla}_t^2\u_t-(\u_t^\top\widetilde{\nabla}_t\u_t)^2}
{\u_t^\top\X_t^\dagger\u_t}
\Bigg\} \nonumber \\
&\geq
\frac{(\sqrt{2}-1)\lambda_{r^*}^*\alpha}{320r^*\kappa_{r^*}^2}h_t.
\end{align}
\end{lemma}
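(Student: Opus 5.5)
The plan is to lower bound $h_t$ by a linear functional and then split that functional into three pieces, each controlled by one of the certificates $a_t,b_t,c_t$.

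\textbf{Reduction to a linear functional.} By convexity,
\[
h_t\leq\langle{\nabla_t,\X_t-\X^*}\rangle=\langle{\widetilde\nabla_t,\X_t-\X^*}\rangle,
\]
where $\X^*\in\mX^*$ is the point closest to $\X_t$ in $\Vert{\cdot}\Vert$. By Assumption \ref{ass:qg}, $\Vert{\X_t-\X^*}\Vert^2\leq 2h_t/\alpha$. Write $\X_t=\Y\Y^\top$ with $\Y\in\reals^{n\times r_t}$, and let $\X^*=\Z\Z^\top$ with $\Z=\V^*\mathbf{S}^{1/2}\in\reals^{n\times r^*}$, so that $\sigma_{r^*}(\Z)^2\geq\lambda_{r^*}^*$ by Lemma \ref{lem:common-face}.

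The aim is to compare $\X_t$ to $\X^*$ through factors. If $r_t\neq r^*$, pad $\Z$ or $\Y$ with zero columns; Lemma \ref{lem:factor-comparison} needs full column rank of $\Z$, so I would first project $\Y$ onto the top-$r^*$ part, or replace $\Z\bR$ by a suitable "alignment." The cleanest route is to choose $\Z'=\Z\bR$ aligned with the $r^*$ leading directions of $\Y$. Lemma \ref{lem:factor-comparison} then gives
\[
\Vert{\Y-\Z'}\Vert_F^2\leq \frac{\Vert{\X_t-\X^*}\Vert_F^2}{2(\sqrt2-1)\lambda_{r^*}^*}.
\]
Since $\X_t-\X^*$ may have rank larger than $2r^*$, $\kappa_{r^*}$ does not apply directly. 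I would split $\X_t$ into its best rank-$r^*$ part plus a tail, and bound the tail's trace by $\langle{\widetilde\nabla_t,\cdot}\rangle$-type terms or absorb it. This is the first delicate point.

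\textbf{Decomposition.} Write $\Delta:=\Y-\Z'$. Then
\[
\X_t-\X^*=\Y\Delta^\top+\Delta\Y^\top-\Delta\Delta^\top,
\]
so
\[
\langle{\widetilde\nabla_t,\X_t-\X^*}\rangle\leq 2\langle{\widetilde\nabla_t\Y,\Delta}\rangle,
\]
using $\widetilde\nabla_t\succeq0$. Now decompose $\widetilde\nabla_t\Y$ into three parts:
\begin{enumerate}
\item the component along $\Y$ scaled by the Frank-Wolfe gap $g:=\langle{\widetilde\nabla_t,\X_t}\rangle$;
\item the part $\Pi_t(\widetilde\nabla_t-g\I)\Y$ within $\textrm{Im}(\X_t)$;
\item the off-diagonal part $(\I-\Pi_t)\widetilde\nabla_t\Y$ leaving the image.
\end{enumerate}

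The first part is bounded by $\sqrt{a_t}\Vert{\Y}\Vert_F\Vert{\Delta}\Vert_F$. The second has spectral size at most $\max_{\v\in\textrm{Im}}|\v^\top\widetilde\nabla_t\v-g|$. Its upper side is $\sqrt{b_t}$. For the lower side, note that $\v^\top\widetilde\nabla_t\v\geq0$, so $g-\v^\top\widetilde\nabla_t\v\leq g=\sqrt{a_t}$. Hence this part contributes at most $(\sqrt{a_t}+\sqrt{b_t})\Vert{\Y}\Vert_F\Vert{\Delta}\Vert_F$, where $\Vert{\Y}\Vert_F=1$.

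For the third part, the key identity is
\[
\Vert{(\I-\u\u^\top)\widetilde\nabla_t\u}\Vert_2^2=\u^\top\widetilde\nabla_t^2\u-(\u^\top\widetilde\nabla_t\u)^2.
\]
With $\u=\Y\y/\Vert{\Y\y}\Vert_2$, the ratio structure gives
\[
\Vert{(\I-\Pi_t)\widetilde\nabla_t\Y}\Vert_2^2\lesssim\max_\u\frac{\Vert{(\I-\u\u^\top)\widetilde\nabla_t\u}\Vert_2^2}{\u^\top\X_t^\dagger\u}.
\]
This is not exactly $c_t$, because $\u_t$ maximizes only $\u^\top\widetilde\nabla_t^2\u/\u^\top\X_t^\dagger\u$. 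I would bridge the gap by writing the numerator as $\u^\top\widetilde\nabla_t^2\u-(\u^\top\widetilde\nabla_t\u)^2$ and using $(\u^\top\widetilde\nabla_t\u)^2/\u^\top\X_t^\dagger\u\leq\lambda_1(\X_t)\,(\ldots)\lesssim a_t+b_t$-type quantities. Concretely, $\u^\top\widetilde\nabla_t\u\leq g+\sqrt{b_t}$ and $\u^\top\X_t^\dagger\u\geq1$. This yields
\[
\lambda_1(\widetilde\nabla_t\X_t\widetilde\nabla_t)\leq c_t+2(a_t+b_t),
\]
so all three parts together are bounded by $O(\sqrt{\max\{a_t,b_t,c_t\}})\,\Vert{\Delta}\Vert_F$.

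\textbf{Chaining.} Combining the pieces gives
\[
h_t\leq C\sqrt{M}\,\Vert{\Delta}\Vert_F\leq C\sqrt{M}\,\frac{\kappa_{r^*}\Vert{\X_t-\X^*}\Vert}{\sqrt{2(\sqrt2-1)\lambda_{r^*}^*}},
\]
where $M:=\max\{a_t,b_t,c_t\}$ and a factor $\sqrt{r^*}$ enters through the Frobenius-versus-spectral conversions. Using $\Vert{\X_t-\X^*}\Vert\leq\sqrt{2h_t/\alpha}$ and squaring yields $h_t\lesssim r^*\kappa_{r^*}^2M/(\alpha\lambda_{r^*}^*)$, which is the claim up to tracking constants.

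\textbf{Main obstacle.} The hardest step is the rank mismatch when $\rank(\X_t)>r^*$. The factor comparison and $\kappa_{r^*}$ both require rank at most $2r^*$, and handling the tail of $\X_t$ outside a rank-$r^*$ subspace is where the argument is delicate. I would try to control the tail's contribution to $\langle{\widetilde\nabla_t,\X_t-\X^*}\rangle$ via the in-image certificate $b_t$ together with $\sqrt{a_t}$. The tail's quadratic-growth distance would be absorbed by bounding the tail in trace norm by $\Vert{\X_t-\X^*}\Vert$, since $\X^*$ has rank $r^*$.
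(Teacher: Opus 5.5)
Your core argument is the same as the paper's. You linearize $h_t\le\langle\widetilde\nabla_t,\X_t-\X^*\rangle$, factor, align via Lemma~\ref{lem:factor-comparison}, drop $-\langle\widetilde\nabla_t,\Delta\Delta^\top\rangle$ using $\widetilde\nabla_t\succeq0$, apply Cauchy--Schwarz, and control $\lambda_1(\widetilde\nabla_t\X_t\widetilde\nabla_t)$ by the certificates. Your bound $\lambda_1(\widetilde\nabla_t\X_t\widetilde\nabla_t)\le c_t+(\sqrt{a_t}+\sqrt{b_t})^2\le 5\max\{a_t,b_t,c_t\}$ is correct, and it is a compact form of the paper's $4/5$ sub-case split. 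Your three-way split of $\widetilde\nabla_t\Y$ is unnecessary, because directly $\Vert\widetilde\nabla_t\Y\Vert_F^2=\trace(\widetilde\nabla_t^2\X_{t,r^*})\le r^*\lambda_1(\widetilde\nabla_t\X_t\widetilde\nabla_t)$.

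The genuine gap is the step you yourself flag as the main obstacle and leave open. The identity $\X_{t,r^*}-\X^*=\Y\Delta^\top+\Delta\Y^\top-\Delta\Delta^\top$ and Lemma~\ref{lem:factor-comparison} apply only to the truncation $\X_{t,r^*}=\Y\Y^\top$, $\Y\in\reals^{n\times r^*}$. For it you do have $\Vert\X_{t,r^*}-\X^*\Vert_F\le 2\kappa_{r^*}d$, by the best rank-$r^*$ approximation property in $\Vert\cdot\Vert$ and the triangle inequality. What remains is $h_t\le\langle\widetilde\nabla_t,\matE\rangle+2\langle\widetilde\nabla_t\Y,\Delta\rangle$ with $\langle\widetilde\nabla_t,\matE\rangle\le(\sqrt{a_t}+\sqrt{b_t})\,\tau$ and $\tau=\trace(\matE)$.

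Everything then hinges on a dimension-free bound $\tau\le C\sqrt{r^*}\kappa_{r^*}d$, which you do not provide. Your hint is to bound the tail in trace norm by $\Vert\X_t-\X^*\Vert$ because $\rank(\X^*)=r^*$. By Mirsky this gives $\tau\le\Vert\X_t-\X^*\Vert_{\mathrm{nuc}}$. But $\X_t-\X^*$ may have rank up to $n$, so $\kappa_{r^*}$ does not apply. A generic comparison of the nuclear norm with $\Vert\cdot\Vert$ then costs a factor growing with $n$, which is exactly what the lemma must avoid.

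The missing idea is to use $\trace(\X_t)=\trace(\X^*)=1$, so that the tail mass becomes a deficit tested against a rank-$r^*$ matrix. By Ky Fan,
$\tau=1-\sum_{i\le r^*}\lambda_i(\X_t)\le 1-\trace(\P^*\X_t)=\trace(\P^*(\X^*-\X_t))\le\Vert\P^*\Vert_*\,d\le\sqrt{r^*}\kappa_{r^*}d$.

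Your own route can also be rescued. Since $\X_t\succeq0$ and $\rank(\X^*)=r^*$, the matrix $\X_t-\X^*$ is traceless and has at most $r^*$ negative eigenvalues. Hence its nuclear norm is twice the trace of its negative part, which is supported on a subspace of dimension at most $r^*$ and is bounded the same way.

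With this tail bound, your sum-of-two-terms chaining closes without the paper's case split, and gives a constant no worse than $320$. Without it, the proposal does not establish the stated dimension-free inequality.
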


\begin{proof}
If $h_t=0$, the claim is immediate. Denote
$\X^*\in\argmin_{\Y\in\mX^*}\Vert{\X_t-\Y}\Vert$ and  $d:=\Vert{\X_t-\X^*}\Vert$.
Recall Assumptions \ref{ass:qg} and \ref{ass:optimal-rank} give
\begin{align}\label{eq:closest-qg}
d^2
\leq
\frac{2h_t}{\alpha},
\qquad
\rank(\X^*)=r^*,
\qquad
\lambda_{r^*}(\X^*)\geq\lambda_{r^*}^*.
\end{align}

Let $\X_t=\sum_{i=1}^n\lambda_i(\X_t)\q_i\q_i^{\top}$
be an eigen-decomposition of $\X_t$ and define
\begin{align}\label{eq:Xr-E}
\X_{t,r^*}
&:=
\sum_{i=1}^{r^*}\lambda_i(\X_t)\q_i\q_i^{\top},
&
\matE&:=\X_t-\X_{t,r^*},
&
\tau&:=\trace(\matE).
\end{align}

Let $\P^*$ denote the projection matrix onto $\textrm{Im}(\X^*)$. By Ky Fan's maximum principle,
\begin{align}
\tau
&=
1-\sum_{i=1}^{r^*}\lambda_i(\X_t)
\leq
1-\trace(\P^*\X_t)
=
\trace\left({\P^*(\X^*-\X_t)}\right)
\nonumber\\
&\leq
\Vert{\P^*}\Vert_*d
\leq
\sqrt{r^*}\,\kappa_{r^*}d,
\label{eq:tail-trace}
\end{align}
where the last inequality follows since for any $\A$ with $\Vert{\A}\Vert\leq1$,
\begin{align*}
\left|{\langle{\P^*,\A}\rangle}\right|
&=
\left|{\trace(\P^*\A\P^*)}\right|
\leq
\sqrt{r^*}\Vert{\P^*\A\P^*}\Vert_F
\\
&\leq
\sqrt{r^*}\,\kappa_{r^*}\Vert{\P^*\A\P^*}\Vert
\leq
\sqrt{r^*}\,\kappa_{r^*}.
\end{align*}
Also, since $\rank(\X^*)=r^*$, the best rank-$r^*$ approximation property for unitarily invariant norms gives
\begin{align}\label{eq:Xr-distance}
\Vert{\X_t-\X_{t,r^*}}\Vert
&\leq d,
&
\Vert{\X_{t,r^*}-\X^*}\Vert
&\leq2d,
&
\Vert{\X_{t,r^*}-\X^*}\Vert_F
&\leq2\kappa_{r^*}d.
\end{align}

Write
\begin{align*}
\X_{t,r^*}=\Q\bLambda\Q^{\top},
\qquad
\Y:=\Q\bLambda^{1/2},
\qquad
\X^*=\Z\Z^{\top},
\end{align*}
where $\Q=[\q_1,\ldots,\q_{r^*}]$ and $\bLambda$ is the diagonal matrix containing the first $r^*$ eigenvalues of $\X_t$. Align $\Z$ with $\Y$ by an optimal right orthogonal transformation (in the sense of Lemma \ref{lem:factor-comparison}), and set
$\D:=\Y-\Z$.
By convexity and the equality $\trace(\X_t)=\trace(\X^*)=1$,
\begin{align}\label{eq:convexity-B}
h_t
\leq
\langle{\nabla_t,\X_t-\X^*}\rangle
=
\langle{\widetilde{\nabla}_t,\X_t-\X^*}\rangle.
\end{align}
Using
$\X_{t,r^*}-\X^*
=
\Y\D^{\top}+\D\Y^{\top}-\D\D^{\top}$ 
and $\widetilde{\nabla}_t\succeq0$, we obtain
\begin{align}
\langle{\widetilde{\nabla}_t,\X_{t,r^*}-\X^*}\rangle
&=
2\langle{\widetilde{\nabla}_t\Y,\D}\rangle
-
\langle{\widetilde{\nabla}_t,\D\D^{\top}}\rangle
\leq
2\langle{\widetilde{\nabla}_t\Y,\D}\rangle.
\label{eq:factor-linearization}
\end{align}
Consequently,
\begin{align}\label{eq:key-decomposition}
h_t
\leq
\langle{\widetilde{\nabla}_t,\X_t-\X_{t,r^*}}\rangle
+
2\langle{\widetilde{\nabla}_t\Y,\D}\rangle.
\end{align}

We consider now two cases. Suppose first that $\langle{\widetilde{\nabla}_t,\X_t-\X_{t,r^*}}\rangle \geq \frac{h_t}{2}$.
The matrix $\matE$ is positive semidefinite and is supported on $\textrm{Im}(\X_t)$. Therefore,
\begin{align*}
\langle{\widetilde{\nabla}_t,\X_t-\X_{t,r^*}}\rangle
=
\trace(\widetilde{\nabla}_t\matE)
&\leq
\left({
\max_{\v\in\textrm{Im}(\X_t):\Vert{\v}\Vert_2=1}
\v^{\top}\widetilde{\nabla}_t\v
}\right)
\tau.
\end{align*}
Using \eqref{eq:tail-trace} and \eqref{eq:closest-qg},
\begin{align*}
\left({
\max_{\v\in\textrm{Im}(\X_t):\Vert{\v}\Vert_2=1}
\v^{\top}\widetilde{\nabla}_t\v
}\right)^2
\geq
\frac{\alpha}{8r^*\kappa_{r^*}^2}h_t.
\end{align*}
Thus, by the definition of $\v_{t,-}$, and using $\max\{a^2,(a-b)^2\} \geq b^2/4$ for any two non-negative scalars $a,b$, we have that
\begin{align*}
\max\Big\{
&\langle{\widetilde{\nabla}_t,\X_t}\rangle^2,
\left({
\v_{t,-}^{\top}\widetilde{\nabla}_t\v_{t,-}
-
\langle{\widetilde{\nabla}_t,\X_t}\rangle
}\right)^2
\Big\}
\geq
\frac{1}{4}\left({\v_{t,-}^{\top}\widetilde{\nabla}_t\v_{t,-}}\right)^2
\geq
\frac{\alpha}{32r^*\kappa_{r^*}^2}h_t,
\end{align*}
which proves \eqref{eq:certificate-lower} in this case since $\lambda_{r^*}^*\leq1$.

Suppose now that $\langle{\widetilde{\nabla}_t,\X_t-\X_{t,r^*}}\rangle < \frac{h_t}{2}$.
From \eqref{eq:key-decomposition},
\begin{align}\label{eq:signal-factor-inner-product}
2\langle{\widetilde{\nabla}_t\Y,\D}\rangle
>
\frac{h_t}{2}.
\end{align}
We have
\begin{align}\label{eq:weighted-trace-identity}
\trace(\widetilde{\nabla}_t^2\X_{t,r^*})
=
\Vert{\widetilde{\nabla}_t\Y}\Vert_F^2.
\end{align}
Since $\sigma_{r^*}(\Z)^2=\lambda_{r^*}(\X^*)\geq\lambda_{r^*}^*$, Lemma \ref{lem:factor-comparison} and \eqref{eq:Xr-distance} give
\begin{align}\label{eq:unweighted-D-bound}
\Vert{\D}\Vert_F^2
\leq
\frac{1}{2(\sqrt{2}-1)\lambda_{r^*}(\X^*)}
\Vert{\X_{t,r^*}-\X^*}\Vert_F^2
\leq
\frac{2\kappa_{r^*}^2}{(\sqrt{2}-1)\lambda_{r^*}^*}d^2.
\end{align}
Combining $\langle{\widetilde{\nabla}_t\Y,\D}\rangle
\leq
\Vert{\widetilde{\nabla}_t\Y}\Vert_F\Vert{\D}\Vert_F$ with \eqref{eq:signal-factor-inner-product}, \eqref{eq:weighted-trace-identity}, and \eqref{eq:unweighted-D-bound} yields
\begin{align*}
\trace(\widetilde{\nabla}_t^2\X_{t,r^*})
&=
\Vert{\widetilde{\nabla}_t\Y}\Vert_F^2
>
\frac{h_t^2}{16\Vert{\D}\Vert_F^2}
\geq
\frac{(\sqrt{2}-1)\lambda_{r^*}^*h_t^2}{32\kappa_{r^*}^2d^2}
\geq
\frac{(\sqrt{2}-1)\alpha\lambda_{r^*}^*}{64\kappa_{r^*}^2}h_t.
\end{align*}
Since $\matE\succeq0$,
\begin{align*}
\widetilde{\nabla}_t\X_t\widetilde{\nabla}_t
=
\widetilde{\nabla}_t\X_{t,r^*}\widetilde{\nabla}_t
+
\widetilde{\nabla}_t\matE\widetilde{\nabla}_t
\succeq
\widetilde{\nabla}_t\X_{t,r^*}\widetilde{\nabla}_t.
\end{align*}
The matrix $\widetilde{\nabla}_t\X_{t,r^*}\widetilde{\nabla}_t$ is positive semidefinite, has rank at most $r^*$, and has trace $\trace(\widetilde{\nabla}_t^2\X_{t,r^*})$. Therefore,
\begin{align}\label{eq:contact-lower}
\lambda_1(\widetilde{\nabla}_t\X_t\widetilde{\nabla}_t)
\geq
\frac{(\sqrt{2}-1)\alpha\lambda_{r^*}^*}{64r^*\kappa_{r^*}^2}h_t.
\end{align}
Directly from the definition of $\u_t$ we have that,
\begin{align}\label{eq:max-ratio}
\frac{\u_t^\top\widetilde{\nabla}_t^2\u_t}{\u_t^\top\X_t^\dagger\u_t}
= 
\lambda_1\left({\X_t^{1/2}\widetilde\nabla_t^2\X_t^{1/2}}\right) = 
\lambda_1(\widetilde{\nabla}_t\X_t\widetilde{\nabla}_t).
\end{align}
Consider now two sub-cases. If 
\begin{align*}
\left({
\max_{\v\in\textrm{Im}(\X_t):\Vert{\v}\Vert_2=1}
\v^{\top}\widetilde{\nabla}_t\v
}\right)^2
\geq
\frac{4}{5}
\frac{\u_t^\top\widetilde{\nabla}_t^2\u_t}{\u_t^\top\X_t^\dagger\u_t},
\end{align*}
then, as before, using the definition of $\v_{t,-}$, we have that
\begin{align*}
\max\Big\{
\langle{\widetilde{\nabla}_t,\X_t}\rangle^2,
\left({
\v_{t,-}^{\top}\widetilde{\nabla}_t\v_{t,-}
-
\langle{\widetilde{\nabla}_t,\X_t}\rangle
}\right)^2
\Big\}
&\geq
\frac{1}{4}\left({\v_{t,-}^{\top}\widetilde{\nabla}_t\v_{t,-}}\right)^2
\\
&\geq
\frac{1}{5}
\frac{\u_t^\top\widetilde{\nabla}_t^2\u_t}{\u_t^\top\X_t^\dagger\u_t} \\
&\geq
\frac{(\sqrt{2}-1)\lambda_{r^*}^*\alpha}{320r^*\kappa_{r^*}^2}h_t.
\end{align*}
Otherwise, 
\begin{align*}
\left({\u_t^\top\widetilde{\nabla}_t\u_t}\right)^2
\leq
\left({
\max_{\v\in\textrm{Im}(\X_t):\Vert{\v}\Vert_2=1}
\v^{\top}\widetilde{\nabla}_t\v
}\right)^2
<
\frac{4}{5}
\frac{\u_t^\top\widetilde{\nabla}_t^2\u_t}{\u_t^\top\X_t^\dagger\u_t}
\leq
\frac{4}{5}\u_t^\top\widetilde{\nabla}_t^2\u_t,
\end{align*}
where the last inequality holds since all positive eigenvalues of $\X_t^{\dagger}$ are at least $1$.

Using \eqref{eq:contact-lower} and \eqref{eq:max-ratio}, it follows that
\begin{align*}
\frac{\u_t^\top\widetilde{\nabla}_t^2\u_t-(\u_t^\top\widetilde{\nabla}_t\u_t)^2}{\u_t^\top\X_t^\dagger\u_t}
&>
\frac{1}{5}
\frac{\u_t^\top\widetilde{\nabla}_t^2\u_t}{\u_t^\top\X_t^\dagger\u_t}
\geq
\frac{(\sqrt{2}-1)\lambda_{r^*}^*\alpha}{320r^*\kappa_{r^*}^2}h_t.
\end{align*}
Thus, \eqref{eq:certificate-lower} holds in all cases.
\end{proof}


\begin{proof}[Proof of Theorem \ref{thm:main}]
Suppose first that Algorithm \ref{alg:newFW} terminates at iteration $t$. Then $\langle{\widetilde{\nabla}_t,\X_t}\rangle=0$,
and by convexity and the definition of $\v_{t,+}$ indeed,
\begin{align*}
h_t
&\leq
\max_{\Y\in\mS^n}
\langle{\nabla_t,\X_t-\Y}\rangle
=
\langle{\widetilde{\nabla}_t,\X_t}\rangle
=0.
\end{align*}
Thus, $\X_t$ is optimal. Before termination, every drop iteration is an Armijo step, while on every non-drop iteration the accepted point is one of the Armijo points. Hence, $h_{t+1}\leq h_t$ on every iteration.

Let $D_t$ denote the number of drop iterations among the first $t-1$ iterations, and let $N_t=t-1-D_t$ denote the number of non-drop iterations. Algorithm \ref{alg:newFW} starts from a rank-one matrix. By Lemma \ref{lem:stepsize}, every drop iteration lowers the rank by one, while every non-drop iteration increases the rank by at most one. For a pairwise step, the maximal subtraction lowers the rank by one and the subsequent rank-one addition increases it by at most one. Therefore,
\begin{align*}
1\leq\rank(\X_t)\leq1+N_t-D_t,
\end{align*}
meaning,
\begin{align}\label{eq:non-drop-count}
D_t\leq N_t,
\qquad
N_t\geq\left\lceil\frac{t-1}{2}\right\rceil.
\end{align}

We first prove the standard bound \eqref{eq:global-sublinear-rate}, without using Assumptions \ref{ass:qg} and \ref{ass:optimal-rank}. Let $C:=\beta{D}^2$. Consider a non-drop iteration $t$. Set $g_t:=\langle{\widetilde{\nabla}_t,\X_t}\rangle$. By convexity and the definition of $\v_{t,+}$, for any $\X^*\in\mX^*$,
\begin{align*}
g_t
\geq
\langle{\nabla_t,\X_t-\X^*}\rangle
\geq
h_t.
\end{align*}
The Frank-Wolfe Armijo point is among the points considered by the algorithm. If its full step is accepted ($\theta_t^{\rm{FW}} = 1$), then from the smoothness of $f$,
\begin{align*}
f(\X_t^{\rm FW})-f^*
\leq
h_t-g_t+\frac{C}{2}
\leq
\frac{C}{2}.
\end{align*}
If its full step is not accepted, then $g_t<C$ and the dyadic argument in the proof of Lemma \ref{lem:support-armijo} gives $\theta_t^{\rm FW}>g_t/(2C)$. Hence, by the Armijo condition and since the accepted iterate is no worse than $\X_t^{\rm FW}$,
\begin{align*}
h_{t+1}
<
h_t-\frac{g_t^2}{4C}
\leq
h_t-\frac{h_t^2}{4C}.
\end{align*}
Consequently, after the first non-drop iteration the objective gap is at most $3C/4$. Thereafter, on every non-drop iteration, the same recurrence holds also when the full Frank-Wolfe step is accepted, since the Armijo condition gives 
\begin{align*}
h_{t+1} \leq h_t-\frac{g_t}{2} \leq h_t - \frac{h_t}{2}  \leq h_t-\frac{h_t^2}{4C},
\end{align*}
where in the last inequality we have used the above observation that $h_t \leq 3C/4$.

Ignoring the intervening drop iterations, which can only decrease the objective, the standard Frank-Wolfe induction for this recurrence gives, after $k\geq1$ non-drop iterations,
\begin{align*}
h_{(k)}\leq\frac{4C}{k+4}.
\end{align*}
Indeed, the bound holds for $k=1$ since $3C/4\leq4C/5$, and, since $x-x^2/(4C)$ is increasing on $[0,2C]$, the induction step is
\begin{align*}
\frac{4C}{k+4}
-\frac{1}{4C}\left({\frac{4C}{k+4}}\right)^2
=
\frac{4C(k+3)}{(k+4)^2}
\leq
\frac{4C}{k+5}.
\end{align*}
Together with monotonicity on drop iterations and \eqref{eq:non-drop-count}, for every $t\geq2$,
\begin{align*}
h_t
\leq
\frac{4\beta{D}^2}{N_t+4}
\leq
\frac{8\beta{D}^2}{t+7},
\end{align*}
which proves \eqref{eq:global-sublinear-rate}.

Suppose now that Assumptions \ref{ass:qg} and \ref{ass:optimal-rank} hold, and fix a non-drop iteration $t$. Since the away search did not accept the full step, Lemmas \ref{lem:support-armijo} and \ref{lem:pairwise-decrease}, together with the rule of taking the best of the three Armijo points, give
\begin{align*}
f(\X_t)-f(\X_{t+1})
\geq
\frac{1}{8L}\max\{a_t,b_t,c_t\}.
\end{align*}
By \eqref{eq:certificate-lower}, it follows that
\begin{align}\label{eq:direct-productive-decrease}
f(\X_t)-f(\X_{t+1})
\geq
\frac{(\sqrt{2}-1)\lambda_{r^*}^*\alpha}
{2560r^*\kappa_{r^*}^2L}h_t,
\end{align}
which by the 
By the definition of $\rho$ in \eqref{eq:rho} gives $h_{t+1} \leq (1-\rho)h_t$,
which proves \eqref{eq:productive-contraction}. 

Taking into account the above upper-bound on the number of drop iterations yields \eqref{eq:global-linear-rate}.
\end{proof}

\section{Acknowledgments}
This work was funded by the European Union (ERC,  ProFreeOpt, 101170791). Views and opinions expressed are however those of the author(s) only and do not necessarily reflect those of the European Union or the European Research Council Executive Agency. Neither the European Union nor the granting authority can be held responsible for them.

\bibliography{bibs.bib}
\bibliographystyle{plain}
\appendix

\section{Proof of Lemma \ref{lem:stepsize}}\label{app:stepsize}

\begin{proof}[Proof of Lemma \ref{lem:stepsize}]
It holds that
\begin{align*}
1-\lambda\v^{\top}\X^{\dagger}\v\geq0
&\Longrightarrow
\I-\lambda{\X^{\dagger}}^{1/2}\v\v^{\top}{\X^{\dagger}}^{1/2}\succeq0
\\
&\Longrightarrow
\X^{1/2}
\left({
\I-\lambda{\X^{\dagger}}^{1/2}\v\v^{\top}{\X^{\dagger}}^{1/2}
}\right)
\X^{1/2}
\succeq0
\\
&\Longrightarrow
\X-\lambda\v\v^{\top}\succeq0,
\end{align*}
where the last implication uses $\v\in\textrm{Im}(\X)$ and hence
$\X^{1/2}{\X^{\dagger}}^{1/2}\v=\v$.

Suppose now that $\lambda=(\v^{\top}\X^{\dagger}\v)^{-1}$. Since $\v\in\textrm{Im}(\X)$, we have
$\textrm{Im}(\X-\lambda\v\v^{\top})\subseteq\textrm{Im}(\X)$. Also,
\begin{align*}
\left({\X-\frac{1}{\v^{\top}\X^{\dagger}\v}\v\v^{\top}}\right)
\X^{\dagger}\v
=
\X\X^{\dagger}\v-\v
=
\mathbf{0}.
\end{align*}
The non-zero vector $\X^{\dagger}\v\in\textrm{Im}(\X)$ therefore enters the kernel after the update. A rank-one modification can reduce the rank by at most one, so the rank is reduced exactly by one.
\end{proof}

\section{Proof of Lemma \ref{lem:factor-comparison}}\label{app:procrustes}

\begin{proof}[Proof of Lemma \ref{lem:factor-comparison}]
Let $\bR$ be an orthogonal solution of the Procrustes problem in \eqref{eq:factor-comparison}, and define
$\H:=\Y-\Z\bR$.
Replacing $\Z$ with $\Z\bR$, we may assume without loss of generality that $\bR=\I$, that $\Y^{\top}\Z\succeq0$, and that $\H^{\top}\Z=\Z^{\top}\H$ is symmetric. Set
$\eta:=2(\sqrt{2}-1)\sigma_r(\Z)^2$. 
A direct expansion gives
\begin{align}
&\Vert{\Y\Y^{\top}-\Z\Z^{\top}}\Vert_F^2
-
\eta\Vert{\H}\Vert_F^2
\nonumber\\
&\qquad=
\trace\left({
(\H^{\top}\H)^2
+4\H^{\top}\H\H^{\top}\Z
+2(\H^{\top}\Z)^2
+2\Z^{\top}\Z\H^{\top}\H
-
\eta\H^{\top}\H
}\right).
\label{eq:procrustes-expansion}
\end{align}
The right-hand side of \eqref{eq:procrustes-expansion} can be written as
\begin{align*}
&\trace\left({
(\H^{\top}\H+\sqrt{2}\H^{\top}\Z)^2
}\right)
\\
&\quad+
\trace\left({
\H^{\top}\H
\left({
(4-2\sqrt{2})\H^{\top}\Z
+2\Z^{\top}\Z
-
\eta\I
}\right)
}\right).
\end{align*}
The first term is non-negative. For the second term, use $\H^{\top}\Z=\Y^{\top}\Z-\Z^{\top}\Z$ to obtain
\begin{align*}
&(4-2\sqrt{2})\H^{\top}\Z
+2\Z^{\top}\Z
-
\eta\I
\\
&\qquad=
(4-2\sqrt{2})\Y^{\top}\Z
+
2(\sqrt{2}-1)
\left({\Z^{\top}\Z-\sigma_r(\Z)^2\I}\right)
\succeq0.
\end{align*}
Here we used $\Y^{\top}\Z\succeq0$ and
$\Z^{\top}\Z\succeq\sigma_r(\Z)^2\I$. Thus,
\begin{align*}
\Vert{\Y\Y^{\top}-\Z\Z^{\top}}\Vert_F^2
\geq
2(\sqrt{2}-1)\sigma_r(\Z)^2\Vert{\H}\Vert_F^2,
\end{align*}
which is equivalent to \eqref{eq:factor-comparison}.
\end{proof}

\end{document}